\newtheorem{theorem}{Theorem}[section]
\newtheorem{lemma}[theorem]{Lemma}
\newtheorem{conjecture*}[theorem]{Conjecture}
\theoremstyle{definition}
\newtheorem{proposition}[theorem]{Proposition}
\theoremstyle{remark}
\numberwithin{equation}{section}
\DeclareRobustCommand*{\vec}[1]{\overrightarrow{#1}}
\newcommand{\R}{{\mathbb{R}}}
\newcommand{\C}{{\mathbb{C}}}
\newcommand{\N}{{\mathbb{N}}}
\def\ha{\frac{1}{2}}
\def\pa{\partial}
\def\ra{\rightarrow}
\def\ga{\alpha}
\def\gd{\delta}
\def\gl{\lambda}
\def\go{\omega}
\def\gs{\sigma}
\def\OPD{pseudo-differential operator}
\begin{document}
\title{On gravito-inertial surface waves}

\author{Yves Colin de Verdi\`ere}
\address{Universit\'e Grenoble-Alpes, CNRS, Institut Fourier, 38000 Grenoble (France)}
\curraddr{}
\email{yves.colin-de-verdiere@univ-grenoble-alpes.fr}

\author{J\'er\'emie Vidal}
\address{CNRS, ENS de Lyon, Univ. Lyon 1, Laboratoire de G\'eologie de Lyon (France)}
\email{jeremie.vidal@ens-lyon.fr}

\subjclass[2020]{Primary 53Z05, 35Q35; Secondary 76U60, 76B70}
\date{Submitted February 2024}

\dedicatory{This paper is dedicated to the memory of Steve.}

\keywords{Spectral theory, microlocal analysis, gravito-inertial waves, surface waves.}


\begin{abstract}
In geophysical environments, wave motions that are shaped by the action of gravity and global rotation bear the name of gravito-inertial waves. 
We present a geometrical  description of gravito-inertial surface waves, which are low-frequency waves existing in the presence of a solid boundary.
We consider an idealized fluid model for an incompressible fluid enclosed in a smooth compact three-dimensional domain, subject to a constant rotation vector. 
The fluid is also stratified in density under a constant Brunt-V\"ais\"al\"a frequency.
The spectral problem is formulated in terms of the pressure, which satisfies a Poincar\'e equation within the domain, and a Kelvin equation on the boundary.
The Poincar\'e equation is elliptic when the wave frequency is small enough, such that we can use the Dirichlet-to-Neumann operator to reduce the Kelvin equation to a pseudo-differential equation on the boundary. 
We find that the wave energy is concentrated on the boundary for large covectors, and can exhibit surface wave attractors for generic domains. 
In an ellipsoid, we show that these waves are square-integrable and reduce to spherical harmonics on the boundary. 
\end{abstract}

       \maketitle
\section{Introduction}
\subsection{The physics of the problem}
Global rotation and buoyancy are ubiquitous ingredients of many geophysical systems (e.g. the Earth's oceans, or the liquid cores of planets).
When the density stratification is stable (i.e. when a light fluid lies above a denser one), the Coriolis and buoyancy forces can sustain waves,
called {\it gravito-inertial waves}.
Since these waves are often believed to be key to understanding the dynamics of geophysical flows, they have received considerable interest. 
In the  linear approximation, these waves are governed by a mixed hyperbolic-elliptic equation \cite{friedlander1982jfm}, which is called the Poincar\'e equation below.
Indeed, the latter reduces to the so-called Poincar\'e equation (denoted by Cartan \cite{cartan1922petites} after the seminal work of Poincar\'e \cite{poincare1885equilibre}) for pure inertial waves without density stratification \cite{CdV2023spectrum}.
Let us consider the simplest case (known as the $f-$plane approximation in geophysical modeling), where the fluid is subjected to a constant global rotation $\vec{\Omega} = (f/2) \vec{e}_3$ ($f$ is called the Coriolis parameter in the geophysical litterature)
and stratified in density with a constant Brunt-Väisälä frequency $N$ under the constant gravity $\vec{g} = -g \vec{e}_3$.
For  fluids in the space $\R^3$, the  frequency $\omega \in \mathbb{R}\setminus 0$ of gravito-inertial waves is then given by the dispersion relation 
\begin{equation}
\label{eq:dispersionrelation}
    \omega ||\boldsymbol{\xi}||^2 = N^2 (\xi_1^2 + \xi_2^2) + f^2 \xi_3^2, \quad \boldsymbol{\xi} = (\xi_1, \xi_2, \xi_3)
\end{equation}
where $\boldsymbol{\xi} $ is the wave vector. 
Dispersion relation (\ref{eq:dispersionrelation}) shows that gravito-inertial waves only exist when $\min(N,f) \leq |\omega| \leq \max(N,f)$.  
Hence, in this case, there is a frequency gap $|\go |<\min(N,f)$  without bounded waves.
However, low-frequency gravito-inertial waves can exist in this frequency gap when the fluid domain is bounded \cite{friedlander1982jfm}.
Motivated by geophysical applications, we recently revisited this canonical problem and (surprisingly) found these low-frequency waves are polynomials when the boundary is an ellipsoid \cite{vidal2024igw}. 

The goal of this study is to present a geometrical description of these gravito-inertial surface waves, which are denoted below by Kelvin waves.
Indeed, they share similar properties with surface waves studied by Lord Kelvin in oceanography \cite{thomson18801}.
This is based on a reduction to the boundary of the wave equations, following our previous works on inertial \cite{CdV2023spectrum} and gravito-inertial waves \cite{vidal2024igw}.

\subsection{Introduction to the equations}

We consider an incompressible fluid in a smooth compact   domain $D\subset \R^3$, which is subjected to a constant global rotation.
The fluid is also supposed to be stably stratified in density with a constant Brunt-V\"ais\"al\"a frequency $N$.

%

Let  $\vec{u}$ be the velocity of the fluid with respect to a frame rotating with speed $\vec{\Omega }$,
$\rho $ be the density of the fluid and $\phi $ be the pressure.
Linear waves, called ``gravito-inertial waves ``, of frequency $\go $ satisfy
a system of equations inside the domain $D$ and boundary conditions. This system of equations is equivalent to
a scalar  equation  $P_\go \phi=0$ for the pressure $\Phi$. 
This equation is called the ``Poincar\'e equation''.
Another approach is to introduce an operator ${\mathcal P} $ acting on the pairs $(\vec{u},\rho )$  so that $P_\go $ is invertible in $L^2(D) $
if and only if $\go $ is not in the spectrum of ${\mathcal P}$.
The operator ${\mathcal P}$, which we call the ``Poincar\'e operator'', is a pseudo-differential operator of degree $0$.
In order to solve the previous spectral problem, one has to look at the ellipticity properties of ${ P}_\go$ or, equivalently, of ${\mathcal P}-\go $.
This splits into 2 parts: either $ P_\go $ is non-elliptic and $\go $ is in the spectrum of ${\mathcal P}$, or $P_\go $ is elliptic and one has to look at the ellipticity of the boundary conditions. 
These latter conditions allow us to reduce the spectral problem to a pseudo-differential equation $K_\go\phi =0 $ for the restriction of the pressure to the boundary. We will call this equation the ``Kelvin equation''.
Our goal is to describe in a more precise way these operators and in particular to compute the principal symbol of $K_\go $.

The manuscript is organized as follows.
We first present general properties of the Poincar\'e and Kelvin equations in \S\ref{sec:poincare} and \S\ref{sec:BVP}.
Then, a detailed derivation of the Kelvin equation is given in \S\ref{sec:comput}.
Next, a microlocal analysis of the Kelvin equation is presented in \S\ref{sec:special} and \S\ref{sec:attractors}.
Finally, the case where $\partial D$ is an ellipsoid is considered in \S\ref{sec:ell}. 
Notably, we show that the spectrum is pure point with polynomial eigenvectors, and that the pressure field reduces to a spherical harmonic on $\partial D$.

\section{The Poincar\'e operator}
\label{sec:poincare}
We consider a fluid in a compact domain $D\subset \mathbb{R}^3$ with a smooth\footnote{Throughout the paper ``smooth'' means $C^\infty$.} boundary $\partial D $, and we equip $\mathbb{R}^3$ with the canonical (orthogonal) basis vectors $(\vec{e}_1,\vec{e}_2,\vec{e}_3)$.
We assume that the fluid domain is submitted to a constant global rotation $\vec{\Omega}$, and that the fluid is stably stratified in density with a  Brunt-V\"ais\"al\"a frequency $N>0$ under the imposed gravity
$\vec{g}=-g \vec{e}_3$.  
In this paper, we always assume that $\vec{g}$ and $N$ are constant. 
We seek small-amplitude periodic motions, with  frequency $\go$.
We refer the interested reader to \cite{vidal2024igw} for further details about the physical model. 
In the linear theory, the equations in the rotating frame are
\begin{equation}
\label{equ:gen}
i\go \vec{u} +2 \vec{\Omega }\wedge \vec{u}-\rho \vec{g}=-\nabla \phi, \quad \mathrm{div} \vec{u} = 0, \quad i\go \rho =N^2 u_3 /g,
\end{equation}
where $\go $ is the frequency. The velocity vector field $\vec{u} $ in the domain $D$  is tangent to the boundary $\partial D$.
The pressure is denoted by $\phi$, which has no other boundary condition than those coming from the velocity boundary conditions, and $\rho$ is the density perturbation. 
We can rewrite these equations in a symmetric form as
\begin{equation}
\label{equ:gen'} 
i\go \vec{u} + 2 \vec{\Omega } \wedge \vec{u}+N \rho_1 \vec{e}_3 =-\nabla \phi, \quad \mathrm{div} \vec{u} = 0, \quad i\go \rho_1= N u_3,
\end{equation}
where $\rho_1:=\rho g/N$ is a rescaled density of the fluid.

In order to be more precise concerning the functional spaces, we introduce the so-called {\it Helmholtz decomposition} (see \cite{zbMATH05931547}, Chapter III).
If we  denote by $\vec{L}^2 (D)$ the Hilbert space of vector field in $D$ and by $H^1 (D) $ the space of scalar functions whose gradients are
in $L^2$, there is an orthogonal decomposition
$\vec{L}^2 (D)=\vec{L}^2 _0 (D) \oplus \nabla H^1 (D) $
where
$\vec{L}^2 _0 (D)$ is the $L^2$-closure of smooth vector fields compactly suppported in the interior of $D$ and divergence free.
The Leray projector $\Pi $ is the orthogonal projection on the first factor. The operator $\Pi $ is a \OPD~belonging to the classe introduced by Louis Boutet de Monvel \cite{BdM71,grubb2008distributions} and the principal symbol  $\pi(x,\boldsymbol{\xi})$ of $\Pi$  is the orthogonal projector on $\ker \boldsymbol{\xi}$.
If $\vec{u}$ is a smooth vector field in $D$, $\Pi \vec{u}$ is a smooth divergence free vector field tangent to the boundary and, conversely, any such vector field is in $\vec{L}^2 _0 (D)$.

Then, we introduce the operator
${\mathcal Q}_\go $ acting on $\vec{L}^2_0 (D) \oplus L^2(D) $ as
\begin{equation}\label{equ:op}
  {\mathcal Q}_\go (\vec{u},\rho_1)= \left(\Pi (i\go \vec{u} + 2 \vec{\Omega } \wedge \vec{u}+N \rho_1 \vec{e}_3),\quad i\go \rho_1- N u_3 
  \right),
\end{equation}
such that Equation (\ref{equ:gen'}) is rewritten as  ${\mathcal Q}_\go (\vec{u},\rho_1)=0$. 
Defining  $\Pi_0$ the operator acting on a pair $(\vec{u},\rho )$ by
\begin{equation*}
\Pi_0 (\vec{u},\rho_1) :=(\Pi \vec{u} , \rho_1),
\end{equation*}
we introduce the Poincar\'e operator $\mathcal{P}$ defined as follows: 
\begin{equation}\label{equ:A}
{\mathcal{P}} :=i\Pi_0 A \Pi_0, \quad A= \left( \begin{matrix}0 & -2\Omega _3 & 2\Omega _2 & 0 \\
2\Omega_3& 0 &- 2\Omega _1 &0 \\
-2\Omega _2 & 2\Omega _1 &0 & N \\
0 & 0 &-N & 0 \end{matrix} \right),
\end{equation}
where $iA$ is self-adjoint on $L^2 (D,\C^4)$. 
We get the following
\begin{lemma} For $\go \ne 0$, 
  the invertibility of the operator $Q_\go $ defined in Equation (\ref{equ:op})  on $\vec{L}^2_0 (D) \oplus L^2(D) $ is equivalent to the fact
  that $\go $ is not in the spectrum of $\mathcal{P}$.
\end{lemma}

The eigenvalues of $iA$ are $\pm \go_\pm $ with
\begin{equation}
\label{equ:omega} 
\omega_\pm = \sqrt{\ha \left( N^2+ f^2 \pm \sqrt{(N^2+f^2)^2 -16 N^2 \Omega _3^2}\right)}, \end{equation}
where we have introduced $f:= 2\|\vec{\Omega } \|$.
Then, the spectrum of $\mathcal{P}$ (i.e. the set of values of $\omega$ for which $\mathcal{P} - \omega I$ is not invertible, where $I$ is the identity operator) is given by the following theorem.

\begin{theorem}
\label{theo:general}
The Poincar\'e operator $\mathcal{P}$ is a bounded self-adjoint operator in $L^2 (D, \mathbb{C}^4)$, and its spectrum $\gs(\mathcal{P})$ is given by
\begin{equation*}
\gs(\mathcal{P})=  [-\go_+, \go_+].
\end{equation*}
\end{theorem}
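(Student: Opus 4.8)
The plan is to prove the three assertions in turn: $\mathcal{P}$ is bounded, $\mathcal{P}$ is self-adjoint, and $\gs(\mathcal{P})=[-\go_+,\go_+]$, the last being the substantial part.

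\emph{Boundedness, self-adjointness, and the inclusion $\gs(\mathcal{P})\subseteq[-\go_+,\go_+]$.} The Leray projector $\Pi$ is an orthogonal projection on $L^2(D,\mathbb{C}^3)$ (see Appendix~\ref{app:leray}), and since $\Pi_0$ acts as the identity on the $\rho_1$-component it is an orthogonal projection on $L^2(D,\mathbb{C}^4)$; in particular $\Pi_0^*=\Pi_0$ and $\|\Pi_0\|\le 1$. As $A$ is multiplication by a constant matrix, $\mathcal{P}=i\Pi_0A\Pi_0$ is bounded with $\|\mathcal{P}\|\le\|iA\|$, where $\|iA\|$ is the operator norm of the Hermitian matrix $iA$, i.e. the largest modulus of its eigenvalues $\pm\go_\pm$, namely $\go_+$. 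Since $iA$ is self-adjoint one has $A^*=-A$, so $\mathcal{P}^*=-i\Pi_0A^*\Pi_0=i\Pi_0A\Pi_0=\mathcal{P}$. Hence $\gs(\mathcal{P})$ is a compact subset of $\mathbb{R}$ with $\gs(\mathcal{P})\subseteq[-\|\mathcal{P}\|,\|\mathcal{P}\|]\subseteq[-\go_+,\go_+]$.

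\emph{Reverse inclusion, interior part.} It remains to construct, for each $\gl\in[-\go_+,\go_+]$, a Weyl sequence, i.e. unit vectors $v_k$ with $\|(\mathcal{P}-\gl)v_k\|\to0$. First, $0\in\gs(\mathcal{P})$, since $\ker\mathcal{P}$ contains the infinite-dimensional space $(\mathrm{Id}-\Pi_0)L^2(D,\mathbb{C}^4)$ of gradient fields. Next, in the interior of $D$ the operator $\mathcal{P}$ is a classical pseudo-differential operator of order $0$ with principal symbol $p(x,\xi)=i\,\pi_0(\xi)\,A\,\pi_0(\xi)$, where $\pi_0(\xi)=\mathrm{diag}\bigl(\mathrm{Id}_{\mathbb{C}^3}-\xi\xi^{T}/\|\xi\|^{2},\,1\bigr)$ is the symbol of $\Pi_0$. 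For $\xi\neq0$, $p(x,\xi)$ is $i$ times the compression of the real antisymmetric matrix $A$ to the three-dimensional subspace $\xi^{\perp}\oplus\mathbb{C}\subset\mathbb{C}^4$, so its eigenvalues are $\{0,0,\pm\gm(\xi)\}$ for a continuous function $\gm\ge0$; Cauchy's interlacing inequalities applied to the compression of $iA$ give $\gm(\xi)\in[\go_-,\go_+]$, both endpoints are attained for suitable $\xi$, and by continuity on $S^2$ the range of $\gm$ is exactly $[\go_-,\go_+]$ (in the $f$-plane case this is just the dispersion relation \eqref{eq:dispersionrelation}). Taking a coherent state $v_k(x)=k^{3/4}\chi(x)\,a\bigl(\sqrt{k}\,(x-x_0)\bigr)\,e^{ik\,x\cdot\xi_0}\,e_0$ centred at an interior point $x_0\in D$, with $\chi$ a cutoff, $a$ a Gaussian, and $e_0\in\mathbb{C}^4$ an eigenvector of $p(x_0,\xi_0)$ for the eigenvalue $\gl\in\{0,\pm\gm(\xi_0)\}$, gives $\|(\mathcal{P}-\gl)v_k\|/\|v_k\|\to0$ as $k\to\infty$. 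Thus $\{0\}\cup[\go_-,\go_+]\cup[-\go_+,-\go_-]\subseteq\gs(\mathcal{P})$.

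\emph{Reverse inclusion, boundary part, and the main obstacle.} There remains the spectral gap $(-\go_-,\go_-)\setminus\{0\}$, which the interior symbol does not see and which must be filled by Weyl sequences concentrated along $\pa D$ — the surface (Kelvin) waves studied in the rest of the paper. Fixing a boundary point, straightening $\pa D$ and rescaling to the half-space $\{x_3>0\}$, a partial Fourier transform in the tangential variables reduces $\mathcal{P}$ to a family, indexed by the tangential covector, of self-adjoint operators on $L^2(\R_+,\mathbb{C}^4)$ obtained by compressing $iA$ to the subspace of fields satisfying $\mathrm{div}\,\vec u=0$ and $u_3|_{x_3=0}=0$; one then checks that, as the tangential covector and the normal profile vary, the spectra of these model operators sweep out all of $[0,\go_-]$ (and, by self-adjointness, of $[-\go_-,0]$). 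Transplanting such model quasimodes to $D$ with a partition of unity subordinate to a cover of $\pa D$, the remainders being controlled since $\pa D$ is smooth and $A,N$ are constant, yields for each $\gl$ in the gap an approximate eigenfunction of $\mathcal{P}$. Showing that these boundary model operators really do cover the whole gap is the one genuinely delicate point; it is the $L^2(D,\mathbb{C}^4)$ counterpart of the reduction to the boundary developed in \S\ref{sec:comput}--\S\ref{sec:attractors} below and, for inertial waves, in \cite{CdV2023spectrum} (see also \cite{vidal2024igw}). Combined with the interior part, this gives $[-\go_+,\go_+]\subseteq\gs(\mathcal{P})$, hence equality.
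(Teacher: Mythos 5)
Your first two parts are fine and essentially reproduce the paper's argument: $\Pi_0$ is an orthogonal projection and $iA$ is Hermitian with eigenvalues $\pm\go_\pm$, giving boundedness, self-adjointness and $\gs(\mathcal{P})\subseteq[-\go_+,\go_+]$; and the coherent-state quasimodes built from the interior symbol $i\pi_0(\boldsymbol{\xi})A\pi_0(\boldsymbol{\xi})$ are exactly the test functions $a(x)e^{i\gl x\cdot\boldsymbol{\xi}_0}$ of Appendix \ref{app:ell}, so $\{0\}\cup\pm[\go_-,\go_+]\subseteq\gs(\mathcal{P})$ (your interlacing claim that the nonzero symbol eigenvalue sweeps all of $[\go_-,\go_+]$, endpoints included, is stated rather than proved, but it matches the paper's own assertion on where $P_\go$ is elliptic and is a minor point).

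The genuine gap is the part you yourself flag: the frequencies $0<|\gl|<\go_-$. This is the actual content of the theorem (and of the paper), and your proposal does not prove it — you assert that the half-space model operators ``sweep out all of $[0,\go_-]$'' and defer the verification. Worse, the model as you write it (boundary $\{x_3=0\}$, constraint $u_3|_{x_3=0}=0$ with the same constant $A$ and $N$) is precisely the configuration of \S\ref{ss:strat-parallel}, a tangent plane orthogonal to the stratification, where the Kelvin operator is \emph{elliptic} throughout $0<|\go|<\go_-$: that half-space carries no surface waves, so its model spectra do not fill the gap, and in fact the claim ``for a fixed boundary point the tangential covector and normal profile sweep the gap'' is false at such points. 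The gap is filled only by boundary points whose tangent plane contains the stratification direction (and with $\Omega_3\neq 0$), i.e. the ``vertical wall''/equatorial configuration of \S\ref{ss:strat-orth}, where the explicit computation gives $\|\vec{W}_1\|^2_\go>\gs_\go(\boldsymbol{a})$, equivalently $(\go^2-\go_-^2)(\go^2-\go_+^2)<0$, for every $0<|\go|<\go_-$. The paper's proof consists exactly of this: reduce to the Kelvin operator of Theorem \ref{theo:kelvin}, compute its principal symbol at such points, conclude non-ellipticity on the whole gap, and invoke the boundary pseudo-differential calculus of Appendix \ref{app:ell} to pass from non-ellipticity to non-invertibility. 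To complete your argument you would have to (i) select these boundary points, and (ii) actually construct the boundary-trapped (exponentially decaying in the normal variable) quasimodes there, or equivalently carry out the symbol computation of \S\ref{sec:comput}--\S\ref{ss:strat-orth}; as it stands, the reverse inclusion on $(-\go_-,\go_-)\setminus\{0\}$ is not established.
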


The proof will be given  in Section \ref{sec:full}.

Our main interest in this paper will be the study of $\gs(\mathcal{P})\cap ]-\go_-,\go_-[\setminus 0$.
The low-frequency waves in this interval are referred to in \cite{friedlander1982jfm} as class-II solutions.
This part of the spectrum is controlled by an equation $K_\go \phi=0 $ satisfied by the pressure on the boundary, which is called here the Kelvin equation.
The solutions of the Kelvin equation for large covectors are localized on the boundary.
This shows that these solutions are associated with surface waves of the Poincar\'e operator.

\section{Equations for  the pressure} 
\label{sec:BVP}
We want to recast the equations for the velocity $\vec{u}$ and the density $\rho_1$ as a boundary-value problem for the scalar pressure $\phi$. 
Indeed, this will ease the microlocal analysis of the problem. 
The pressure satisfies an equation in $D$, called the Poincar\'e equation, and boundary conditions given by the Kelvin equation.
We explain below how to obtain these two equations in the general case. 

\subsection{Poincar\'e equation}
When $\go \ne \pm \go_\pm $, we can solve the equation
\begin{equation}
\vec{u} = M_\go (i\nabla \phi)
\label{eq:rPrelationship}
\end{equation}
with $M_\go =j^\star (\go -iA)^{-1}j$ where $j:\R^3 \ra \R^4$ is the injection $\vec{u}\ra (\vec{u},0)^t$.
Then, we will use  the divergenceless condition ${\rm div}(\vec{u})=0$ to obtain the Poincar\'e equation $P_\go \phi =0$ for $\phi$.
We write, using the classical formula for the inverse of a matrix, 
\[ \vec{u} = \frac{1}{{\rm det }(M_\go )}{\rm div}\left(^t {\rm adj}(M_\go ) \nabla \phi \right) \]
  and, removing the scalar factor in front of the previous expression,  the equation  
\begin{equation*}
P_\go \phi =   \begin{vmatrix} 
 \go -i A& \begin{pmatrix}\pa_1 \\ \pa_2 \\\pa_3 \\0 \end{pmatrix} \\ 
 \begin{pmatrix} \pa_1  & \pa_2  &\pa_3  &0 \end{pmatrix} & 0
\end{vmatrix} \phi.
\end{equation*}
Note that this simple writing of the Poincar\'e equation using a  determinant does not appear to have been written in this way in the literature.
We will use again that kind of writing later in order to get the Kelvin equation. 
Expanding the determinant, we get
\begin{equation}
\label{equ:poincare} 
P_\go \phi = -\go \left( (\go^2 -N^2)\Delta +N^2 \frac{\pa^2}{\pa x_3^2} -4
 \left(\sum_{i=1}^3 \Omega _i \frac{\pa}{\pa x_i}\right)^2\right) \phi. 
\end{equation}
The principal symbol of $P_\go$ is
\begin{equation}\label{equ:sigmago}
\gs_\go (\boldsymbol{\xi}) = \go \left( (N^2-\go^2)\|\boldsymbol{\xi} \|^2 -N^2 \xi_3^2 +4 \left( \boldsymbol{\xi} .\vec{\Omega}\right)^2\right),
\end{equation}
where $\boldsymbol{\xi} = (\xi_1,\xi_2, \xi_3)$ is the covector of norm $||\boldsymbol{\xi}|| = (\xi_1^2 + \xi_2^2 + \xi_3^2)^{1/2}$.
We also define the real-valued matrix $\Sigma_\omega$, associated with the quadratic form $\sigma_\omega$, as
\begin{equation}\label{equ:Sigmago}
\sigma_\omega (\boldsymbol{\xi}) := \langle \Sigma _\go \boldsymbol{\xi}|\boldsymbol{\xi} \rangle
\end{equation}
where $\langle .| . \rangle$ is the canonical duality product.
If $0<\go <\go_- $,  $\Sigma _\go $ is a real-valued, symmetric, and strictly positive matrix. 
The Poincar\'e equation  is elliptic if and only if
$0< |\go |< \go_-$ or $|\go | > \go_+ $.

\subsection{Kelvin equation}\label{ss:kelvin}
Using again the equation   $\vec{u} = M_\go (i\nabla \phi)$, we  write that $\vec{u}$ is tangent to $\pa D $.
This allows us to obtain the Kelvin equation. 
Since this equation is quite complicated in general, we will only compute it in some special cases (see \S\ref{sec:comput}). 
If $0<|\go| <\go_-$, the Poincar\'e equation is elliptic and the Kelvin equation can be written
as a pseudo-differential equation $K_\go\phi=0 $ on the boundary $\partial D$, which satisfies Theorem \ref{theo:kelvin}.  
\begin{theorem}
\label{theo:kelvin}
The Poincar\'e equation is elliptic when $0<|\go | <\go_-$, and the Kelvin equation reads then 
\begin{equation}
K_\go \phi = DtoN_\go \phi  +i \vec{W}_\go \phi,
\label{eq:kelvinth}
\end{equation}
where the Dirichlet to Neumann operator $DtoN_\go $ (see Appendix \ref{app:dtn}) is defined using the Euclidian metric $g_\go $ associated to the principal symbol $\gs_\go $ of $P_\go$, that is $g_\go =\Sigma _\go^{-1}$, and $\vec{W}_\go$ is a real-valued vector field tangent to the boundary.
Moreover, $\vec{W}_\go$ is divergenceless with respect to the area $v_\go $ associated to the restriction $g_{\go,\pa }$ of $g_\go $ to $\pa D $.
In particular, $K_\go $ is a self-adjoint \OPD ~ of degree $1$ on $L^2(\pa D, v_\go )$ whose principal symbol is 
\begin{equation}
k_\go (x,\boldsymbol{\xi})= \sqrt{g_{\go,\pa} ^\star(x,\boldsymbol{\xi})} - \langle \boldsymbol{\xi} | \vec{W}_\go(x)\rangle,
\end{equation}
where $(x, \boldsymbol{\xi}) \in T^\star(\partial D)$ and $g_{\omega,\partial}^\star$ is the dual of $g_{\omega,\partial}$.
The Kelvin equation $K_\go \phi =0$ is elliptic at $x\in \pa D$ if and only if $\| \vec{W}_\go (x)\| _{g_{\go,\pa}  }< 1$. 
\end{theorem}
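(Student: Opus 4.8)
The plan is to start from the relation $\vec{u}=M_\go(i\nabla\phi)$ with $M_\go=j^\star(\go-iA)^{-1}j$, and express the tangency condition $\vec{u}\cdot\vec{n}=0$ on $\pa D$ (where $\vec{n}$ is the Euclidean unit normal) as a condition on $\phi|_{\pa D}$. Since the Poincar\'e operator $P_\go$ is elliptic for $0<|\go|<\go_-$ — this is exactly the statement, proved via positivity of $\Sigma_\go$, that appears after equation (\ref{equ:poincare}) — the Dirichlet problem $P_\go\phi=0$ in $D$, $\phi=u_{\pa}$ on $\pa D$ is well posed, so $\nabla\phi$ inside $D$ is determined by $\phi|_{\pa D}$ and we genuinely obtain a pseudo-differential equation on $\pa D$. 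First I would decompose the matrix $M_\go$, which is a first-order differential operator in $\phi$ with constant coefficients, into its symmetric and antisymmetric parts: $M_\go=S_\go+iT_\go$ with $S_\go,T_\go$ real. The symmetric part $S_\go$ is (up to sign) the matrix $\Sigma_\go$ of the symbol $\gs_\go$, so $\vec{n}\cdot S_\go\nabla\phi$ is, by definition of the Dirichlet-to-Neumann map for the metric $g_\go=\Sigma_\go^{-1}$, precisely $\mathrm{DtN}_\go\phi$ (this is the computation carried out in detail in \S\ref{sec:comput} for the special cases, and in Appendix \ref{app:dtn}); the antisymmetric part contributes the first-order tangential term $i\,\vec{W}_\go\phi$ with $\vec{W}_\go$ obtained by contracting $T_\go$ against $\vec{n}$ and projecting onto $T(\pa D)$. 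Checking that $\vec{W}_\go$ is real and tangent is then immediate from the reality and antisymmetry of $T_\go$.

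The divergence-freeness of $\vec{W}_\go$ with respect to the volume $v_\go$ is the structural heart of the statement and I expect it to be the main obstacle. The natural route is self-adjointness: the full boundary operator $K_\go$ must be formally self-adjoint on $L^2(\pa D,v_\go)$ because it encodes, via the quadratic form $\langle P_\go\phi,\phi\rangle$ together with a boundary pairing (Green's formula for the $g_\go$-Laplacian-type operator $P_\go$), a symmetric quadratic form — this reflects the self-adjointness of $i A$ noted after (\ref{equ:A}). Now $\mathrm{DtN}_\go$ is already self-adjoint on $L^2(\pa D,v_\go)$ (a standard fact recalled in Appendix \ref{app:dtn}), so the zeroth-plus-first-order remainder $i\,\vec{W}_\go$ must itself be self-adjoint; for a vector field the operator $\phi\mapsto i\vec{W}_\go\phi$ is self-adjoint on $L^2(v_\go)$ exactly when $\vec{W}_\go$ is divergence-free for $v_\go$, since the adjoint of $i\vec{W}_\go$ is $i\vec{W}_\go + i\,\mathrm{div}_{v_\go}(\vec{W}_\go)$. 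So the plan is: (i) establish the Green-type identity exhibiting $K_\go$ as the boundary form of a symmetric problem, hence $K_\go=K_\go^\star$ on $L^2(v_\go)$; (ii) combine with self-adjointness of $\mathrm{DtN}_\go$; (iii) read off $\mathrm{div}_{v_\go}\vec{W}_\go=0$. One must be a little careful that the ``symmetrized'' form in (\ref{equ:gen'}), which removed the spurious zero of $P_\go$ at $\go=0$, is what makes the relevant pairing symmetric — this is where the rescaling $\rho_1=\rho g/N$ earns its keep.

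For the principal symbol and the ellipticity criterion, the remaining steps are routine. The principal symbol of $\mathrm{DtN}_\go$ for a metric $h$ is $\sqrt{h_{\pa}^\star(x,\boldsymbol\xi)}$ (the norm of the cotangent vector in the induced boundary metric), which here is $\sqrt{g_{\go,\pa}^\star(x,\boldsymbol\xi)}$; the operator $i\vec{W}_\go$ has principal symbol $\langle\boldsymbol\xi\,|\,\vec{W}_\go(x)\rangle$ after multiplying by $i$ and using the convention that the symbol of $\pa_j$ is $i\xi_j$, giving
\[
k_\go(x,\boldsymbol\xi)=\sqrt{g_{\go,\pa}^\star(x,\boldsymbol\xi)}-\langle\boldsymbol\xi\,|\,\vec{W}_\go(x)\rangle .
\]
Since $\mathrm{DtN}_\go$ has degree $1$ and $i\vec{W}_\go$ has degree $1$, $K_\go$ has degree $1$. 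Finally, $k_\go(x,\boldsymbol\xi)=0$ for some $\boldsymbol\xi\ne0$ iff $\sqrt{g_{\go,\pa}^\star(x,\boldsymbol\xi)}=\langle\boldsymbol\xi\,|\,\vec{W}_\go(x)\rangle$; writing $\vec{W}_\go$ via the metric duality and applying Cauchy--Schwarz in $g_{\go,\pa}$, the left side dominates the right strictly for all $\boldsymbol\xi\ne0$ precisely when $\|\vec{W}_\go(x)\|_{g_{\go,\pa}}<1$, and a characteristic covector exists exactly when $\|\vec{W}_\go(x)\|_{g_{\go,\pa}}\ge 1$. This gives the stated ellipticity condition at each $x\in\pa D$.
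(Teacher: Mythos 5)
Your overall strategy is the same as the paper's: obtain the boundary operator from the tangency of $\vec{u}=M_\go(i\nabla \phi)$, split it into a normal part identified with a Dirichlet-to-Neumann contribution for $g_\go=\Sigma_\go^{-1}$ plus a tangential first-order term, deduce ${\rm div}_{v_\go}\vec{W}_\go=0$ from self-adjointness of the boundary pairing combined with self-adjointness of $DtN_\go$, and finish with the symbol computation and Cauchy--Schwarz for the ellipticity criterion. All of that matches the paper's proof in structure.

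There is, however, one step you assert that is precisely where the paper has to do real work: the claim that $\vec{n}\cdot S_\go\nabla\phi$ is \emph{exactly} $DtN_\go\phi$. With the Euclidean unit conormal $\boldsymbol{a}$, this contraction equals (up to an $\go$-dependent constant) $\sqrt{\gs_\go(\boldsymbol{a}(x))}\, DtN_\go\phi$, and the factor $\sqrt{\gs_\go(\boldsymbol{a}(x))}$ is a non-constant positive function on $\pa D$; this is exactly the computation of \S\ref{sec:comput}, where the Kelvin equation first appears as $\sqrt{\gs_\go(\boldsymbol{a})}\,DtN_\go\phi+i\vec{W}_1\phi=0$ and one must set $\vec{W}_\go=\vec{W}_1/\sqrt{\gs_\go(\boldsymbol{a})}$. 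You may of course divide the equation by this factor, since it is a vanishing condition, but then the self-adjointness you invoke is no longer free: the Green-type identity makes the \emph{un-normalized} boundary operator self-adjoint on $L^2(\pa D,v_\go)$ (restricted to solutions of $P_\go\phi=0$), and dividing a self-adjoint operator by a non-constant positive function does not in general preserve self-adjointness, so the chain ``$DtN_\go$ self-adjoint $\Rightarrow i\vec{W}_\go$ self-adjoint $\Rightarrow {\rm div}_{v_\go}\vec{W}_\go=0$'' is not yet justified for the normalized $K_\go$. The paper closes this gap by writing the self-adjoint boundary operator as $\Psi\, DtN_\go+i\vec{W}_1$ with $\Psi$ an a priori unknown real function; self-adjointness then yields both ${\rm div}_\go(\vec{W}_1)=0$ and the commutation $\Psi\, DtN_\go-DtN_\go\,\Psi=0$, and the latter forces $\Psi$ to be invariant under the geodesic flow of $g_{\go,\pa}$, hence constant, after which one divides by that constant. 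Alternatively, since $g_\go$ has constant coefficients, you could verify directly that when the Euclidean boundary term is rewritten against the measure $v_\go$ the factor $\sqrt{\gs_\go(\boldsymbol{a})}$ cancels against the ratio of the Euclidean and $g_{\go,\pa}$ area elements, using $\det(g_{\go,\pa})=\det(g_\go)\,\gs_\go(\boldsymbol{a})$; but some such argument must be supplied. With that point repaired, the remaining items in your sketch (reality and tangency of $\vec{W}_\go$ from the Hermitian splitting of $M_\go$, the principal symbol formula, and the ellipticity condition $\|\vec{W}_\go(x)\|_{g_{\go,\pa}}<1$) agree with the paper.
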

Explicit computations of the vector field $\vec{W}_\go$ will be given in sections \ref{sec:comput} and  \ref{sec:special}. 

\begin{proof}
For any two smooth functions $\phi_1$ and $\phi_2$ on $D$,  let us  consider  the Hermitian bracket
\[ (( \phi_1|\phi_2 ))= \langle i \nabla \phi_1 | M_\go(i\nabla \phi_2) \rangle_{L^2} \]
where $\langle u_1 | u_2 \rangle_{L^2} = \int_D u_1 \cdot \overline{u}_2 \, |dx|$
where $\overline{u}$ is the complex conjugate of $u$.
Integrating by part, we get
\begin{equation}\label{equ:lgo}
(( \phi_1|\phi_2 ))=- \int_D \phi_1 \overline{{\rm div} \left(M_\go \nabla \phi_2\right)} \, |dx|   + \int _{\pa D } \phi_1 \overline{L_\go {\phi}_2} \, v_\go
\end{equation}
where $L_\go$ is a complex-valued vector field. 
Let us assume now that $P_\go \phi_2 = 0 $, and we get
\begin{equation*}
(( \phi_1|\phi_2 ))= \int _{\pa D } \phi_1 \overline{L_\go {\phi}_2 } \, v_\go
\end{equation*}
This vanishes if and only if $ M_\go (\nabla \phi_2)$ is orthogonal to all gradient fields and, hence, is tangent to the boundary. 
In other words, the equation $L_\go \phi =0$ is the boundary condition for $\phi $, which  is a starting point to get the Kelvin equation.
Assuming also that  $P_\go \phi_1 = 0 $, we see that $ L_\go $ is self-adjoint. We can always decompose the vector field $L_\go $
as $\Psi\vec{n}_\go + i\vec{W}_1 $ where $\Psi$ is a function,  $\vec{n }_\go $ is the unit outgoing normal to $\pa D$ for the metric $g_\go $, and $\vec{W}$ is a complex vector field tangent to
$\partial D$.
Then, we can rewrite $L_\go$ as 
$K_\go = \Psi DtoN_\go + i\vec{W}_1$. 
The self-adjointness of the operator $K_\omega$  implies that the principal symbol is real-valued.
$\Psi$ is thus real-valued, and $\vec{W}_1 $ is a real-valued vector field.
Taking the adjoint, we get
\begin{equation*}
K_\go=K_\go^\star = DtoN_\go \Psi + i\vec{W}_1 + i{\rm div}_\go(\vec{W}_1).
\end{equation*}
This implies, by separation of the real and imaginary parts, that ${\rm div}_\omega (\vec{W}_1)=0$ and $ \Psi DtoN_\go - DtoN_\go \Psi = 0$.
The last equation implies that $\Psi:\partial D \ra \R $ is invariant by the geodesic flow of
$g_{\go ,\pa }$ and, hence, constant. 
Removing this real constant, we get the final result.
\end{proof}

\subsection{What do the Poincar\'e and Kelvin equations say about $\mathcal{P}$?}\label{ss:poke}

Recall the well-known fact that the zeroes of the characteristic polynomial of a linear operator in a space of finite dimension are exactly the eigenvalues of the operator. 
This is no longer the case here.
We can think of the ``Poincar\'e + Kelvin'' equations as a kind of characteristic polynomial for the Poincar\'e operator.
A motivation for that is that the determinant of the principal symbol of ${\mathcal P}-\go $ is, up to some non-vanishing factor outside $\go =0$, the principal symbol $\gs_\go $ of $P_\go$.
More precisely, we have $ \gs_p \left( {\mathcal P} -\go \right) =-\go \gs_\go $.

In this section, we will focus on the case $0<|\go |< \go_- $. 
The case $\go_- < |\go | <\go_+ $ can be treated in a similar, even easier, way. 
Indeed, the ellipticity of the Poincar\'e equation and of $\mathcal{P}-\go $ are equivalent in the latter case.
Here, we have the following two results:
\begin{proposition} \label{prop:inj}
  The frequency $ \go $ is an eigenvalue of ${\mathcal P} $ with an eigenvector in $L^2$    if and only if
  there exists a non trivial  solution $\phi \in H^1$ of $P_\go \phi=0 $ and $K_\go \phi =0$.
\end{proposition}
This is clearly a consequence of the fact that one can calculate $\nabla \phi $ in terms of $(\vec{u},\rho_1)$ and vice-versa.

\begin{proposition}\label{prop:inv}If $0<|\go|< \go_-$ and  assume that $K_\go $ is non elliptic at some covector of $T^\star \pa D \setminus 0$,
  then $\go $ belongs to the spectrum of   ${\mathcal P}$.
\end{proposition}

We will use results on traces from distributions in $D$ to distributions in $\pa D$ which can be found in \cite{zbMATH03353865}, Chap. 8 and 9.
The idea is to use Weyl's criterion, that is to construct approximate eigenfunctions. 

We start with a sequence $ \eta_n \in H^\ha (\pa D)$\footnote{The spaces $H^s$ with $s\in\R$ are the classical Sobolev spaces.} so
that  $\| K_\go \eta_n \|_{H^{-\ha}}\leq \frac{1}{n}$ and $\| \eta_n \|_{H^{\ha}}=1 $. Such a sequence exists because $K_\go $ is non elliptic at least at some
covector in  $T^\star \pa D \setminus 0$.
We then extend $\eta_n $ as $\phi_n \in H^1(D)$ with
$P_\go \phi_n =0$ and $0<c\leq \| \phi_n \|_{H^1} \leq C $.
From the  sequence $(\phi_n)$, we  reconstruct $(\vec{u}_n,(\rho_1)_n )$ and will apply  the Weyl's criterion.
First, we get from the explicit formula giving $\vec{u}_n$ in terms of $\phi_n $  that 
$\|\vec{u}_n \| _{L^2} $ does not tends to zero. We need to show that
$\| (\mathcal{P}-\go )(\vec{u}_n,(\rho_1)_n )\|_{L^2}$ tends to zero.
We can reread equation (\ref{equ:lgo}) as
\begin{equation}\label{equ:lgo2}
\langle \nabla \psi|\vec{u}_n \rangle =- \int_D \psi P_\go \phi_n \, |dx|   + \int _{\pa D } \psi K_\go {\eta}_n \, v_\go
\end{equation}
with $\psi \in H^1(D)$. 
The second term vanishes by construction, while the last one is $O(1/n)\| \psi \|_{H^1 }$. This inequality  follows from the fact
that $\psi_{|\pa D }\in H^{\ha}(\pa D)$ with $\| \psi \|_{H^\ha (\pa D )}\leq C \|\psi \|_{H^1 (D) } $. 
Let us define $v_n:=\Pi \vec{u}_n$.
We have hence, from Pythagore's Theorem, 
$\vec{u}_n -\vec{v}_n=\vec{r}_n $ with $\| \vec{r }_n \|_{L^2}=O(1/n)$.
Hence, we have
\begin{equation*}
\| i\go \vec{v}_n +2\vec{\Omega } \wedge \vec{v}_n -N (\rho_1)_n +\nabla \phi_n \|_{L^2}=O(1/n), \quad \| i\go (\rho_1)_n -N (v_3)_n \| _{L^2} =O(1/n),
\end{equation*}
and $\vec{v}_n \in L_0^2 (D)$.
This leads to
\begin{equation*}
\| (\mathcal{P}-\go) (\vec{v_n},(\rho_1)_n)^t\| _{L^2} =O(1/n),
\end{equation*}
while $ \| \vec{v}_n \|_{L^2} $ stays bounded below by a $>0$ constant.

\section{Computing the Kelvin equation}
\label{sec:comput}
Let us fix a point $m\in \pa D $, and we assume that $m$ has coordinates $(0,0,0)$ and  the tangent plane to $\pa D$ at $m$
is given by $a_1x_1+a_2x_2 + a_3 x_3=0$ where $\boldsymbol{a}=(a_1,a_2,a_3)$ is a non-zero outward normal covector of $D$.
We will write directly, that is without using the Helmhotz decomposition, the condition that $\vec{u}$ is tangent on $\partial D$. 
It can be written as a vanishing condition for the determinant given by
\begin{equation}
B_\go(\nabla \phi, \boldsymbol{a})= \begin{vmatrix} 
 \go -i A& \begin{pmatrix}\pa_1\phi \\ \pa_2\phi   \\ \pa_3\phi  \\0 \end{pmatrix} \\ 
\begin{pmatrix} a_1  & a_2  &a_3  &0 \end{pmatrix} &0
\end{vmatrix}.
\label{eq:hermitianB}
\end{equation}
The Hermitian form $B_\omega$ can be written as $B_\go =\Sigma _\go +i \Xi_\go$, where $\Xi_\go  $ is real and antisymmetric. 
We can rewrite $ B_\go(\nabla \phi, \boldsymbol{a}) $ as a complex-valued vector field acting on $\phi$.
Indeed, this vector field is proportional to $L_\go $ from Equation (\ref{equ:lgo}). This  gives
\begin{equation*}
 B_\go(\nabla \phi, \boldsymbol{a})=\left(\vec{N} +i \vec{W}_1\right)\phi
\end{equation*}
with $\vec{N}= \Sigma_\go (\boldsymbol{a}) $. 
Using the fact that $g_\go =\Sigma_\go ^{-1}$, we get 
\begin{equation*}
g_\go (\vec{N}, \vec{e}) = \langle \boldsymbol{a}|\vec{e} \rangle.
\end{equation*}
This confirms that $\vec{N}$ is $g_\go$-orthogonal to $\pa D$. We see that  the vector $\vec{N}$ is outward because, if $\vec{n}$ is an outgoing vector,
$ g_\go (\vec{N}, \vec{n})= \langle \boldsymbol{a}|\vec{n} \rangle$ is positive.  
Moreover, we have
\begin{equation*}
    g_\go (\vec{N}, \vec{N})= \langle  \boldsymbol{a}|\vec{N} \rangle = \sigma _\go (\boldsymbol{a}).
\end{equation*}
Hence, we have $\vec{N}=\sqrt{\gs_\go(\boldsymbol{a})} \vec{n}_\go $. 
Finally, we get the Kelvin equation
\begin{equation*}
    \sqrt{\sigma _\go (\boldsymbol{a})} DtoN_\go \phi + i \vec{W}_1 \phi =0,
\end{equation*}
which is of the form (\ref{eq:kelvinth}) with $\vec{W}=\vec{W}_1/ \sqrt{\sigma _\go (\boldsymbol{a})}$.

The Kelvin equation is non-elliptic at points where the tangent plane is given by $\boldsymbol{a}$ if and only if 
\begin{equation*}
\| \vec{W}_1 \|_\go^2 > \sigma _\go (\boldsymbol{a}).
\end{equation*}
At the points $x\in \pa D$ such that $\vec{W}_1 (x)\ne 0 $ for $\go=0$, this holds when $\go $ is small enough. 
Indeed, there is a coefficient $1/\go $ in front of $g_\go $ and a coefficient $\go $ in front of $\gs(\go)$. 
Note also that $DtoN_\go$ and $\vec{W}_1$ are  even functions of $\go $ such that, if $K_\go \phi =0$, we have $K_{-\go}\bar{\phi}=0$. 
This allows us to obtain, as expected, real-valued eigenvectors.  


\section{Explicit computations}
\label{sec:special} 
\subsection{General case}
\begin{figure}
\centering
\includegraphics[width=0.7\textwidth]{./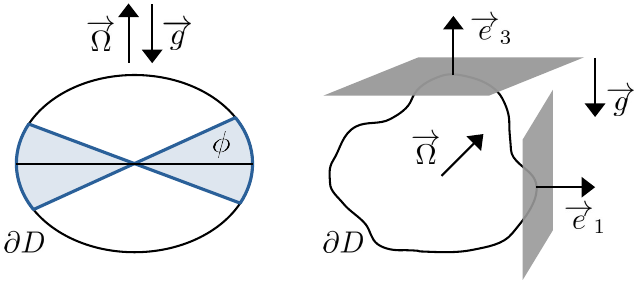}
\caption{Non-ellipticity of the Kelvin equation for different cases. \emph{Left}: Aligned case with $\vec{\Omega} \propto \vec{g}$. The Kelvin equation is non-elliptic near the equator when $\cos \phi \leq \omega/N$ (see \S\ref{ss:vertrot}). \emph{Right}: The Kelvin equation is elliptic when the tangent plane (gray) is parallel to gravity (see \S\ref{ss:strat-parallel}), whereas it is non-elliptic when the tangent plane is orthogonal to gravity (see \S\ref{ss:strat-orth}).}
\label{fig:geometry}
\end{figure}

As illustrated in figure \ref{fig:geometry}, we seek below whether the Kelvin equation is elliptic or not for different cases. 
To check the ellipticity of $K_\omega$, we need to compute the Hermitian form $B_\omega$ and the metric $g_\omega$.
Without loss of generality, we can assume that $\vec{\Omega}= (\Omega _1,0,\Omega _3 )^t $ with
\begin{equation} \label{equ:fsimple} f^2 = 4(\Omega _1^2 + \Omega _3^2). \end{equation}
The principal symbol of the Poincar\'e equation given in Equation (\ref{equ:sigmago}) is expanded  as
\begin{equation*}
\gs_\go (\boldsymbol{\xi})=\go \left [ (N^2+ 4\Omega_1^2-\go^2)\xi_1^2 +(N^2 -\go^2)\xi_2^2 + (4\Omega_3^2 -\go^2)\xi_3^2  +8\Omega_1 \Omega_3 \xi_1 \xi_3 \right ]
\end{equation*}
with $\boldsymbol{\xi}=(\xi_1,\xi_2,\xi_3)$, and the associated metric is
\begin{equation*}
g_\omega =\frac{1}{\gd \omega } \left ( (4\Omega_3^2 -\omega^2)dx_1^2 +(N^2+ 4\Omega_1^2-\omega^2)dx_3^2 -8\Omega_1 \Omega_3 dx_1 dx_3 \right )+\frac{dx_2^2}{\omega(N^2-\omega^2)}
\end{equation*}
with $\gd = (\go^2 - \go_-^2) (\go^2 - \go_+^2)$. 
Moreover, we have
\begin{equation*}
\vec{W}_1=2 \Omega _3 (N^2-\go^2) (a_1 \pa_2-a_2\pa_1)+2 \Omega_1 \go^2 (a_2\pa_3 -a_3\pa_2),
\end{equation*}
which is obtained from equation (\ref{eq:hermitianB}). 
We see that $\vec{W}_1$ is everywhere transverse to the meridians. 
This implies the wave propagation in the prograde direction (e.g. is eastward when $\Omega_3>0$). 
When $\omega$ is small, we also have
\begin{equation*}
    \vec{W}_1 = 2 \Omega _3 N^2 (a_1 \partial_2 - a_2 \partial_1) + \mathcal{O}(\omega^2)
\end{equation*}
and $\vec{W}= \vec{W_1}/\sqrt{\gs_\go (\boldsymbol{a})}$.
We can now evaluate these formulas for different cases.

\subsection{Vertical stratification and rotation}
\label{ss:vertrot}
We describe here the Kelvin equation when the rotation vector $\vec{\Omega }$ is parallel to gravity. We have  $f=2|\Omega_3|$
and, from Equation (\ref{equ:omega}),  $\go_-=\min (N,f),~\go_{+}=\max(N,f)$.
The corresponding Kelvin equation was derived in \cite{friedlander1982jfm}, and revisited in \cite{vidal2024igw} using microlocal arguments when $\partial D$ is an ellipsoid. We will use the quantity 
 $\gd= (\go^2 -N^2)(\go^2-f^2)$.  
The problem is then invariant by rotation around the $x_3$-axis.
Hence, we can assume that
$\boldsymbol{a}= (\cos\alpha, 0, \sin \alpha ) $ with $|\alpha |\leq \pi /2 $, where $\alpha$ can be called the latitude.
Then, we get
\begin{equation*}
\gs_\go (\boldsymbol{a}) =\go \left [ (N^2-\go^2) \cos^2 \alpha + (f^2 -\go^2) \sin^2 \alpha \right ]
\end{equation*}
and
\begin{equation*}
\vec{W}_1= f (N^2-\go^2)  \cos\alpha  \pa_2, \quad \|  \vec{W}_1 \|_\go ^2=\frac{f^2 (N^2-\go^2)^2 \cos ^2 \alpha}{\go (N^2-\go ^2)}.
\end{equation*}
We can check that the Kelvin equation is elliptic at large latitudes, that is when $|\cos \alpha|< \go / N $. 
In particular, the operator is never elliptic at the equator $\alpha =0$.

We can also compute the principal symbol  $k_\go $ of the Kelvin equation $K_\go\phi=0$.
This principal symbol depends only on the tangent plane to $\pa D$ at the point that we consider.
We can thus assume that
\[ D:\{ (x,y,z) | \cos  \alpha x + \sin \alpha z \leq 0 \}, \]
so that the co-vector $\boldsymbol{a}$ is indeed normal to the boundary and outgoing.
We parametrize the boundary by
\[ m(u,v)= (-v\sin \alpha , u, v\cos \alpha  )\]
and the dual momentum $(\xi_u,\xi_v)$.
The restriction of the metric $g_\go $ to $\pa D$ is given by
\[ g_{\go,\pa D}=\frac{1}{\go} \left( \frac{du^2}{N^2-\go^2} +\frac{((N^2-f^2)\cos^2 \ga +(f^2-\go^ 2))dv^2}{(N^2-\go^2)(f^2-\go^2)}\right). \]
Hence, the principal symbol of the DtoN operator is 
\[ \gs (DtoN)( \xi_u,\xi_v)=\sqrt{ \go \left((N^2-\go^2)\xi_u ^2 + \frac{(N^2-\go^2)(f^2-\go^2)\xi_v^2}{((N^2-f^2)\cos^2 \ga +(f^2-\go^2)}\right)}. \]
The principal symbol of the Kelvin equation is given, up to a product by a non-vanishing function, by 
\[ k_\go (\xi_u,\xi_v)=\sqrt{\gs_\go (\boldsymbol{a})(\xi_u,\xi_v)}\gs (DtoN)( \xi_u,\xi_v)-2\Omega _3 (N^2-\go^2)\xi_u  \cos \alpha  \]
or
\begin{equation}
  \label {equ:full} 
  \gs_\go (\boldsymbol{a})\gs (DtoN)( \xi_u,\xi_v)-f^2 (N^2-\go^2)\xi_u^2\cos ^2 \alpha, 
\end{equation}
with the condition that $\xi_u$ has the same sign as $\Omega _3$.
Expanding the expression (\ref{equ:full}), we get, after removing a non-vanishing factor, \[ \go^2 (\xi_u^2+\xi_v^2) -N^2 \cos^2 \alpha \xi_u^2 \] such that the principal symbol is given by 
\begin{equation*}
k_\go (x,\boldsymbol{\xi})=\go \mp  N \cos \alpha (x) \frac{\xi_u}{\sqrt{\xi_u^2+\xi_v^2 }},
\end{equation*}
where the sign is negative if $\Omega_3 >0$ and positive otherwise. 
In particular, we see that
\begin{itemize}
\item When $0<N\leq f $,  $T^\star \pa D \setminus 0 $ is foliated by the cones $k_\go^{-1}(\go )$ with $|\go |\leq \go_- =N $.
\item When $0<f<N $, we have 
\begin{equation*}
\cup_{|\go|\leq f }k_\go^{-1}(\go )=\left\{   \left| \cos \alpha \frac{ \xi_s}{\sqrt{\xi_s^2+\xi_u^2}}\right|\leq f/N \right\},
\end{equation*}
which is not the full cotangent space. 
This will have some implications on the Weyl asymptotics for the eigenvalues when $\partial D$ is an ellipsoid.
\end{itemize}

\subsection{Stratification parallel to the boundary}
\label{ss:strat-parallel}
We consider the case where the tangent plane is horizontal $\{ x_3 \leq 0 \}$, such that $\boldsymbol{a} = (0,0,1)$ and the stratification with $N>0$ is parallel to the tangent plane.
Then, we have
\begin{equation*}
\vec{W}_1= -2 \Omega_1 \go ^2  \pa_2, \quad \| \vec{W}_1 \|_\go^2 = \frac{ 4\Omega_1^2 \go^4 }{\go (N^2-\go^2)}, \quad \gs_\go (\boldsymbol{a})=\go (4\Omega _3^2 -\go^2).
\end{equation*}
The ellipticity condition is  given by
\begin{equation*}
\| \vec{W}_1 \|_\go^2< \go (4\Omega _3^2 -\go^2).
\end{equation*}
After some calculations, this gives
\[ (\omega^2-\omega_-^2)(\omega^2-\omega_+^2) >0 \]
when $0 < |\go |<\go_-$. 
Therefore, there are no surface waves in this case. 

\subsection{Stratification orthogonal to the boundary}
\label{ss:strat-orth}
We assume that $\boldsymbol{a}=(1,0,0)$ and $\Omega_3 \neq 0$. 
Then, we have 
\begin{equation*}
\vec{W}_1=  2 (N^2-\go^2)  \Omega_3 \pa_2, \quad \| \vec{W}_1 \|_\go^2= \frac{4 (N^2-\go^2) \Omega_3^2 }{\go }, \quad \gs_\go (\boldsymbol{a})= \go (N^2+ 4\Omega_1^2 -\go^2).
\end{equation*}
After some calculations, this gives the ellipticity condition
\[ (\omega^2-\omega_-^2)(\omega^2-\omega_+^2 )<0 .\]
We see that the Kelvin equation is non-elliptic when $0 < |\go |<\go_-$. 
Hence, there are always surface waves located near the equator.
This completes the fact that the spectrum is always $[-\go_+,\go_+] $, as stated in Theorem \ref{theo:general}.

\subsection{Proof of Theorem \ref{theo:general}} \label{sec:full}

We have $\| \Pi_0 \| =1$, while the eigenvalues of the Hermitian matrix $iA$ are $\pm \go_\pm $.
This proves that $\| \mathcal{P} \| \leq \go_+ $. 
The theorem follows from the  ellipticity conditions (see Appendix \ref{app:ell} and also Theorem 2.1. of \cite{colin2020spectral}).
A previous remark is that we can check that the determinant of the principal symbol of $\mathcal{P}-\go $ is equal to $\gs_\go $ up to some non-vanishing constant outside $\go=0$.

More precisely,
the proof of Theorem \ref{theo:general} splits into 3 parts:

\begin{enumerate}
  \item if $\go_-< |\go |<\go_+ $, the Poincar\'e operator himself is non-elliptic and we can use Weyl's criterion.
\item if $\Omega _3 \ne 0$ and  $0< |\go |<  \go_- $ , it follows from Section \ref{ss:strat-orth} that the operator
$K_\go $ is non elliptic at least at points where the stratification is orthogonal to the boundary. Such points
exist because $\pa D$ is smooth and compact: we can assume that  the stratification is horizontal, then they are the points where the tangent plan to the boundary is vertical.
\item if $\Omega _3=0$, $\go_-=0$ and there is nothing more to prove. 
\end{enumerate}

\section{Dynamics of waves}
\label{sec:attractors}
We can study a more general problem.
Let $(X,g)$ be a closed Riemannian surface, and $\vec{W}$ be a divergenceless vector field on $X$.
We consider a self-adjoint operator $K$ defined by
\begin{equation*}
K= E-\frac{1}{i}\vec{W},
\end{equation*}
where $E$ is an elliptic self-adjoint \OPD~of principal symbol $e$ of degree $1$. 
The principal symbol $k$ of $K$ is given by
\begin{equation*}
k (x,\boldsymbol{\xi})= e (x,\boldsymbol{\xi} )  -\langle \boldsymbol{\xi} | \vec{W}(x) \rangle.
\end{equation*}
If the characteristic cone $C_c: =k^{-1} (0)$ is non-degenerate, $K$ is called of principal type.
The projection on $X$ of $C_c$ is
$S=\pi_X(C_c) = \{   \| \vec{W} \| _g \geq 1 \} $.
For each $x \in \partial D$ such that $ \| \vec{W}(x) \| _g > 1 $, there are exactly two covectors directions in $C_c $. 
The base of the cone $C_c $ consists of
two copies of $S$ glued along the boundaries $\{   \| \vec{W} \| _g =1 \} $. 
If the Hamiltonian vector field is non-radial
on $C_c$, this base is the union of 2D-tori (because it supports a non-vanishing
vector field: the unscaled Hamiltonian field of the principal symbol).

If $k$ is allowed to smoothly vary as a function of a spectral parameter $\omega$ (e.g. $k_\go$ in the previous sections), the projection of the
classical (the group velocity) dynamics (the group velocity) onto $X$
is given by
\begin{equation*}
\left(\frac{\pa k_\go }{\pa \go }\right)^{-1} \left( \vec{G} + \vec{W} \right),
\end{equation*}
where $\vec{G}$ is the projection of the geodesic field for the corresponding values of $\xi $. The main direction is then given
by $\pm \vec{W}$ with $\pm =$sign $\frac{\pa k_\go }{\pa \go }$. 
This gives the motion to the east in our case, at least for small values of $\omega$. 

We can apply the study of attractors initiated in 
\cite{CdV2020attractors,colin2020spectral}. 
We assume that the Hamiltonian flow of $k$ satisfies the Morse-Smale assumptions of  \cite{CdV2020attractors}.
Then, the solutions of the Kelvin equation are Lagrangian distributions located over the attractors and the repulsors.
In the case of the Kelvin equation, we thus predict the existence of some attractors for the surface waves (which remain to be observed in physics).

\section{The case of ellipsoids}
\label{sec:ell}
\begin{figure}
\centering
\includegraphics[width=0.6\textwidth]{./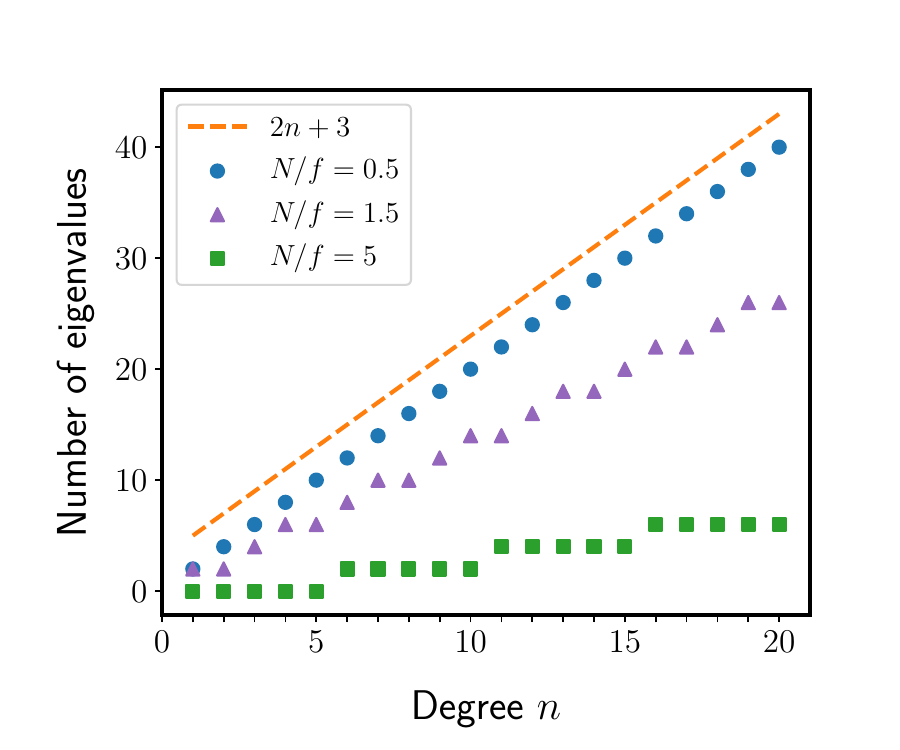}
\caption{Number of eigenvalues in the interval $0<|\omega|< \omega_-$ for polynomial eigenvectors of degree less than $n$. Aligned rotation and gravity (as considered in \S\ref{ss:vertrot}) when $\partial D$ is a sphere. 
For every degree $n$, the number of eigenvalues is bounded by the number  $2n+3$ of spherical harmonics  of degree  $n+1$. 
Numerical calculations following the method presented in \cite{vidal2024igw}.}
\label{fig:fig3}
\end{figure}

Now, we assume that $\partial D$ is an ellipsoid. We call then $D$ an ellipsoid body.
Ellipsoidal models have proven useful for geophysical applications (e.g. to model some peculiar vortices in the Earth's oceans \cite{vidal2024igw}).
In an ellipsoid body, the spectrum of the Poincar\'e operator is pure-point and dense in $[-\omega_+,\omega_+]$, with polynomial eigenvectors.
In particular, gravito-inertial surface modes were found in \cite{vidal2024igw} when $0 < |\omega| < \omega_-$. 
Numerical calculations (figure \ref{fig:fig3}) show that the number of eigenvalues associated with each polynomial vector space of degree $n$ is bounded by $2n+3$, which is the number of spherical harmonics of degree $n+1$.
Indeed, if the velocity $\vec{u}$ is of degree $n\geq 1$, then the pressure $\phi$ is of degree $n+1$ according to equation (\ref{eq:rPrelationship}).
This strongly suggests a link, when $\partial D$ is an ellipsoid, between the gravito-inertial surface modes and the theory of spherical harmonics.
This is the subject of this  section.
Moreover, the numerical results also confirm the microlocal predictions
detailed in \S\ref{ss:vertrot}, which show that the surface waves do not span the full cotangent space when $N\geq f$.
Another natural question is thus to estimate, when the polynomial degree $n$ tends to infinity, the asymptotic number of eigenvalues of the Poincar\'e operator in the interval $]0,\go_-[$.
A conjecture for this question is given in Section \ref{sec:perspectives}.

We consider the case of an ellipsoid body $E:=L(B)$, where $B$ is the unit Euclidean ball and $L$ is linear. 
The pull-back of the Poincar\'e operator to $B$ is then of the form ${\mathcal P}_E=\Pi_E C_E \Pi_E $ where $C_E$ is a $4\times 4$ matrix
and $\Pi_E$ is the Leray projector for the pull-back metric $L^\star (g_0)$. It turns out that ${\mathcal P}_E$ commutes with the Legendre operator
as used in  \cite{CdV2023spectrum}: 
recall that  the Legendre operator defined on functions with values  in $\C^d$ is defined by
\begin{equation*}
    \mathcal{L}^{\oplus d} := \left( -\Delta - L^\star L \right) \otimes {\rm Id}_{\C^d}
\end{equation*}
with $L=\sum_{i=1}^3 x_i \pa_i $, and  that $\mathcal{L}^{\oplus 4}$ commutes with the Poincar\'e operator. 

\begin{lemma}
$\mathcal{L}^{\oplus 3}$ and $\nabla$ satisfy a commutator formula given by
\begin{equation*}
    \mathcal{L}^{\oplus 3} \nabla - \nabla  \mathcal{L}^{\oplus 1} =-2\nabla (1+L).
\end{equation*} 
\end{lemma}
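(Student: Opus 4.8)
The plan is to prove this as a pointwise identity of differential operators acting on scalar functions, by expanding both sides using the definitions $\mathcal{L}^{\oplus d} = (-\Delta - L^\star L)\otimes \mathrm{Id}$ with $L = \sum_i x_i \pa_i$ and tracking how $\nabla$ fails to commute with $\Delta$ and with $L^\star L$ separately. First I would recall that $L^\star$, the formal adjoint of $L = \sum_i x_i\pa_i$ on $L^2(\R^3,|dx|)$, is $L^\star = -\sum_i \pa_i x_i = -(L+3)$, so $L^\star L = -(L+3)L = -(L^2 + 3L)$; hence $\mathcal{L}^{\oplus 1} = -\Delta + L^2 + 3L$ and likewise $\mathcal{L}^{\oplus 3} = (-\Delta + L^2 + 3L)\otimes \mathrm{Id}_{\C^3}$. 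The claimed identity then reduces to computing, for a scalar function $u$, the commutator $[\mathcal{L}^{\oplus 3},\nabla]u := \mathcal{L}^{\oplus 3}(\nabla u) - \nabla(\mathcal{L}^{\oplus 1} u)$ and checking it equals $-2\nabla((1+L)u)$.

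The key computation splits into two pieces: the commutator of $\nabla$ with $-\Delta$, and the commutator of $\nabla$ with $L^2 + 3L$. For the Laplacian, $\Delta$ has constant coefficients, so it commutes with each $\pa_i$; thus $[-\Delta\otimes\mathrm{Id}, \nabla] = 0$ and this term contributes nothing. The whole discrepancy comes from the Euler operator part. Here I would use the fundamental commutation relation $[\pa_i, L] = [\pa_i, \sum_j x_j\pa_j] = \pa_i$, i.e. $\pa_i L = (L+1)\pa_i$ as operators, equivalently $L\,\pa_i = \pa_i(L-1)$. Applying this twice gives $L^2\pa_i = \pa_i(L-1)^2$ and $3L\pa_i = 3\pa_i(L-1)$, so $(L^2+3L)\pa_i = \pa_i\big((L-1)^2 + 3(L-1)\big) = \pa_i\big(L^2 - 2L + 1 + 3L - 3\big) = \pa_i(L^2 + L - 2)$. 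Therefore, componentwise, $\big[(L^2+3L)\otimes\mathrm{Id}\big](\nabla u)_i - \pa_i\big((L^2+3L)u\big) = \pa_i\big((L^2+L-2)u\big) - \pa_i\big((L^2+3L)u\big) = \pa_i\big((-2L - 2)u\big) = -2\,\pa_i\big((L+1)u\big)$.

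Assembling the two contributions, $\mathcal{L}^{\oplus 3}\nabla u - \nabla \mathcal{L}^{\oplus 1} u = 0 + \big(-2\nabla((1+L)u)\big)$, which is exactly the asserted formula $\mathcal{L}^{\oplus 3}\nabla - \nabla\mathcal{L}^{\oplus 1} = -2\nabla(1+L)$. The only genuinely delicate point — and the step I would present most carefully — is fixing the sign and the additive constant in $L^\star$, since the whole right-hand side coefficient $-2$ and the $(1+L)$ factor are determined by the bookkeeping in $L^\star L = -(L^2+3L)$ together with the relation $L\pa_i = \pa_i(L-1)$; a slip in either would change the constant. Everything else is a routine, purely algebraic manipulation of first-order operators with polynomial coefficients, valid on smooth functions (in particular on the polynomial eigenvectors relevant to the ellipsoid case).
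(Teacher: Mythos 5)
Your proof is correct and follows essentially the same route as the paper: both hinge on $[\Delta,\nabla]=0$, the Euler-operator commutation $[\pa_i,L]=\pa_i$ (equivalently $[L,\nabla]=-\nabla$), and the identity $L^\star=-L-3$; you merely substitute $L^\star L=-(L^2+3L)$ at the outset instead of commuting $L^\star$ and $L$ separately. The bookkeeping of the constant $-2$ and the factor $(1+L)$ checks out.
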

\begin{proof}
We have $[ \Delta , \nabla ]=0$, $[L,\nabla ]=-\nabla$ and $[L^\star ,\nabla ]=\nabla$. Then, the proof easily follows by using $L^\star =-L-3 $.
\end{proof}

\begin{lemma}
\label{prop:comm}
For any smooth function $\phi$ on $B$, we have
\begin{equation*}
\left(  \mathcal{L}^{\oplus 3} \nabla \phi \right)_{|S^2} =\nabla  \left(\Delta _{S^2} -2 \right) \phi_{|{S^2}}.
\end{equation*}
\end{lemma}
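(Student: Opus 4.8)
The plan is to deduce this from the previous commutator lemma by analysing the operator $\mathcal{L}^{\oplus 1} = -\Delta - L^\star L$ acting on the boundary restriction $\phi_{|S^2}$, together with the decomposition of $\Delta$ near $S^2$ into its radial and spherical parts. First I would observe that on $B$ we may write $\Delta = \partial_r^2 + \frac{2}{r}\partial_r + \frac{1}{r^2}\Delta_{S^2}$ in polar coordinates, and that $L = \sum x_i\partial_i = r\partial_r$ is the radial (Euler) vector field, so that $L^\star L = (-L-3)L = -L^2 - 3L = -(r\partial_r)^2 - 3r\partial_r$. Hence $\mathcal{L}^{\oplus 1} = -\Delta + (r\partial_r)^2 + 3r\partial_r$, and after substituting the polar form of $\Delta$ and simplifying the radial terms one finds $\mathcal{L}^{\oplus 1} = (r^2-1)\partial_r^2 + (r\,\text{coefficient})\,\partial_r - \frac{1}{r^2}\Delta_{S^2}$; in particular, restricting to $r=1$, the second-order radial term drops out and $\mathcal{L}^{\oplus 1}\phi_{|S^2} = -\Delta_{S^2}\phi_{|S^2}$ plus possibly a first-order radial contribution that I would track carefully. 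This identifies $\mathcal{L}^{\oplus 1}$ on the sphere, up to normal-derivative terms, with $-\Delta_{S^2}$.

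Next I would apply the commutator lemma: $\mathcal{L}^{\oplus 3}\nabla\phi = \nabla\mathcal{L}^{\oplus 1}\phi - 2\nabla(1+L)\phi = \nabla\mathcal{L}^{\oplus 1}\phi - 2\nabla\phi - 2\nabla(L\phi)$. Restricting the whole identity to $S^2$ and using the computation of the previous paragraph, I expect $\mathcal{L}^{\oplus 1}\phi$ to restrict to $(-\Delta_{S^2})\phi_{|S^2}$ modulo a term involving $\partial_r\phi$ at $r=1$; combined with the $-2L\phi = -2r\partial_r\phi$ term, which restricts to $-2\partial_r\phi$ at $r=1$, one sees that the normal-derivative contributions should combine and cancel (or be absorbed), leaving exactly $\nabla(\Delta_{S^2}-2)\phi_{|S^2}$ on the right-hand side. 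The logic is that $\mathcal{L}^{\oplus 1}$ was designed to annihilate the radial direction, so its boundary restriction is purely tangential, and the extra $-2(1+L)$ correction supplies precisely the shift by $-2$.

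A cleaner and more robust route — which I would prefer if the bookkeeping above gets delicate — is to test on homogeneous harmonic polynomials. Every smooth $\phi$ on $B$ is, for the purpose of computing both sides at $S^2$, well-approximated by (finite sums of) homogeneous polynomials, and by a further reduction using $L^\star L$ one can restrict attention to harmonic homogeneous polynomials $p$ of degree $m$: for such $p$ one has $\Delta p = 0$, $Lp = mp$, $L^\star L p = -m(m+1)p$ (after using $L^\star = -L-3$), hence $\mathcal{L}^{\oplus 1}p = m(m+1)p$, while $p_{|S^2}$ is a degree-$m$ spherical harmonic so $\Delta_{S^2}p_{|S^2} = -m(m+1)p_{|S^2}$. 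The commutator lemma then gives $\mathcal{L}^{\oplus 3}\nabla p = \nabla\mathcal{L}^{\oplus 1}p - 2\nabla(1+L)p = m(m+1)\nabla p - 2(m+1)\nabla p = (m^2-m-2)\nabla p = (m+1)(m-2)\nabla p$, and $(\Delta_{S^2}-2)$ applied to $p_{|S^2}$ yields $(-m(m+1)-2)p_{|S^2}$; one then checks that $\nabla$ of a degree-$m$ harmonic, restricted to $S^2$, pairs these eigenvalues correctly, i.e. that $\nabla\big((\Delta_{S^2}-2)\phi\big)_{|S^2}$ has the matching eigenvalue $-(m^2+m+2)$ on the relevant component — which forces the identity by density/linearity.

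The main obstacle is the careful handling of the normal (radial) derivative terms when restricting $\mathcal{L}^{\oplus 1}\phi$ and $L\phi$ to $S^2$ for general $\phi$: the naive restriction of $\Delta\phi$ is not $\Delta_{S^2}\phi_{|S^2}$, and one must verify that all $\partial_r\phi$-contributions cancel against the $-2L$ term rather than survive. The polynomial-testing argument sidesteps this by working with objects that are genuinely homogeneous, so there the only real check is the compatibility of the eigenvalue shift under $\nabla$; I expect that to follow from the observation that if $p$ is harmonic homogeneous of degree $m$ then each component of $\nabla p$ is harmonic homogeneous of degree $m-1$, so on $S^2$ it is a spherical harmonic of degree $m-1$ with $\Delta_{S^2}$-eigenvalue $-(m-1)m$, and a short index computation reconciles $-(m-1)m$ with $m(m+1)-2\cdot(\text{component bookkeeping})$ — this is the one place where the $\C^3$-valued structure (as opposed to scalar) genuinely enters, through the lemma's $-2\nabla(1+L)$ term.
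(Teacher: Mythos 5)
Your primary route is essentially the paper's proof, and the bookkeeping you flag as the ``main obstacle'' closes exactly. With $L=r\partial_r$ and $L^\star L=(-L-3)L=-r^2\partial_r^2-4r\partial_r$, the operator coming from the commutator lemma is, in polar coordinates (paper's convention, where $\Delta_{S^2}$ has positive spectrum),
$-\Delta-L^\star L-2(1+L)=-\partial_r^2-\tfrac{2}{r}\partial_r+\tfrac{1}{r^2}\Delta_{S^2}+r^2\partial_r^2+4r\partial_r-2r\partial_r-2$;
at $r=1$ the $\partial_r^2$ terms cancel and the $\partial_r$ coefficients sum to $-2+4-2=0$, leaving $\Delta_{S^2}-2$. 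In other words, the $+2\partial_r\phi$ surviving from $\mathcal{L}^{\oplus 1}$ at $r=1$ is killed precisely by the $-2L\phi$ supplied by the commutator lemma; this two-line cancellation is the whole of the paper's argument, so your first paragraph, once the coefficient is tracked, \emph{is} the paper's proof. (Mind the sign convention: the paper's $\Delta_{S^2}$ has eigenvalue $m(m+1)$ on degree-$m$ spherical harmonics, opposite to the analyst's convention you use.)

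Your preferred fallback, however, has a genuine gap. Finite linear combinations of harmonic homogeneous polynomials are harmonic functions, so they are not dense in $C^3(\overline{B})$: you would only ever test the identity on harmonic $\phi$, for which the normal derivative is slaved to the boundary value ($\partial_r p_{|S^2}=m\,p_{|S^2}$). Since the entire content of the lemma is that the left-hand side, which a priori depends on the interior extension of $\phi_{|S^2}$ through $\partial_r\phi|_{r=1}$, in fact does not, a test on harmonic extensions cannot detect this dependence; the ``further reduction using $L^\star L$'' to the harmonic case is exactly the unjustified step. To make polynomial testing work you must verify the identity on the full spanning family $|x|^{2k}h_m$ with $k\geq 0$, and for $k\geq 1$ that verification is the same radial bookkeeping as your first route, so nothing is saved. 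There is also an arithmetic slip: $L^\star L p=(-L-3)Lp=-m(m+3)p$, hence $\mathcal{L}^{\oplus 1}p=m(m+3)p$ (consistent with the paper's $\ker(\mathcal{L}-n(n+3))$), not $m(m+1)p$; with this correction the harmonic-case consistency check does come out right, namely $\mathcal{L}^{\oplus 3}\nabla p=\bigl(m(m+3)-2(m+1)\bigr)\nabla p=(m-1)(m+2)\nabla p$, each component of $\nabla p$ being harmonic of degree $m-1$, while $(\Delta_{S^2}-2)$ on a degree-$m$ spherical harmonic gives $m(m+1)-2=(m-1)(m+2)$ in the paper's convention.
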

\begin{proof}
From the previous lemma, we have 
\begin{equation*}
    {\mathcal L}^{\oplus 3} \nabla \phi = \nabla \left( -\Delta  - L^\star L - 2(1+L) \right)\phi.
\end{equation*}
Using polar coordinates $(r,\theta )$ with $\theta \in S^2$, we get
\begin{equation*}
-\Delta  - L^\star L - 2(1+L) = -\frac{d^2}{dr^2}-\frac{2}{r}\frac{d}{dr}+\frac{1}{r^2}\Delta_{S^2} -L^\star L -2L -2.
\end{equation*}
Putting $r=1$ in the previous equation, and using $L^\star L= -r^2 \dfrac{d^2}{dr^2} -4L $,
we get $\left((-\Delta  - L^\star L - 2(1+L))\phi\right)  _{|S^2}= \left(\Delta _{S^2}-2 \right)\left(\phi _{|S^2} \right).$
\end{proof}

Hence, the Laplace-Beltrami operator on $S^2$ will play the role of $\mathcal{L}^{\oplus 3}$ for the waves in the bulk.
As a consequence, the gravito-inertial surface waves satisfy:
\begin{theorem}
\label{theo:harm}
The restriction to $\pa E$ of the pressure associated with every eigenvector of the Poincar\'e operator is a spherical harmonic. 
More precisely, if the ellipsoid body $E=L(B)$ where $B$ is the unit Euclidean ball and $L$ is linear, the pull-back of the pressures on the boundary of $E$ are $S^2$-spherical harmonics. 
\end{theorem}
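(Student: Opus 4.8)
The plan is to carry the commutation $[\mathcal{P}_E,\mathcal{L}^{\oplus 4}]=0$ recorded above through the purely algebraic passage from the velocity $\vec{u}$ to the pressure $\phi$, and then through the two intertwining lemmas: ``$\vec{u}$ is an eigenfunction of the Legendre operator'' will force ``$\phi_{|S^2}$ is an eigenfunction of $\Delta_{S^2}$'', i.e.\ a spherical harmonic.

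First I would pass to a simultaneous eigenvector. Since $\partial E$ is an ellipsoid, the spectrum of $\mathcal{P}_E$ is pure point with polynomial eigenvectors, so each eigenspace $V_\omega=\ker(\mathcal{P}_E-\omega I)$ is finite-dimensional and consists of polynomial fields of bounded degree. As $\mathcal{L}^{\oplus 4}$ commutes with $\mathcal{P}_E$ and acts semisimply on polynomials of bounded degree (it is the Legendre operator of \cite{CdV2023spectrum}), each $V_\omega$ decomposes into joint eigenspaces of $(\mathcal{P}_E,\mathcal{L}^{\oplus 4})$; it therefore suffices to treat $(\vec{u},\rho_1)$ with $\mathcal{P}_E(\vec{u},\rho_1)=\omega(\vec{u},\rho_1)$ and $\mathcal{L}^{\oplus 4}(\vec{u},\rho_1)=\mu(\vec{u},\rho_1)$, such vectors moreover spanning each $V_\omega$. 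Such an eigenmode satisfies (\ref{equ:gen'}) for some pressure $\phi$, that is $-\nabla\phi=i\omega\vec{u}+2\vec{\Omega}\wedge\vec{u}+N\rho_1\vec{e}_3$; after pulling back along the linear map $L$ (so as to work on the unit ball $B$, where $\mathcal{L}^{\oplus 4}$ and $S^2$ live, and keeping the same letters for the pull-backs) this stays a \emph{zeroth-order}, constant-coefficient relation $-\nabla\phi=\widetilde M(\vec{u},\rho_1)$ with $\widetilde M$ a constant $3\times 4$ matrix. Since $\mathcal{L}^{\oplus 3}$ and $\mathcal{L}^{\oplus 4}$ are $-\Delta-L^\star L$ applied componentwise, one has $\mathcal{L}^{\oplus 3}\widetilde M=\widetilde M\,\mathcal{L}^{\oplus 4}$, whence
\begin{equation*}
\mathcal{L}^{\oplus 3}\nabla\phi=-\mathcal{L}^{\oplus 3}\widetilde M(\vec{u},\rho_1)=-\widetilde M\,\mathcal{L}^{\oplus 4}(\vec{u},\rho_1)=-\mu\,\widetilde M(\vec{u},\rho_1)=\mu\,\nabla\phi .
\end{equation*}

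Next I would invoke the two lemmas. The first lemma rewrites the left-hand side as $\nabla\big[(\mathcal{L}^{\oplus 1}-2(1+L))\phi\big]$, so $\nabla\big[(\mathcal{L}^{\oplus 1}-2(1+L))\phi-\mu\phi\big]=0$ and hence $(\mathcal{L}^{\oplus 1}-2(1+L))\phi=\mu\phi+c$ for a constant $c$. Because the pressure is defined only up to an additive constant while $\mathcal{L}^{\oplus 1}-2(1+L)$ sends constants to constants, I may adjust that constant so that $c=0$ (the exceptional value $\mu=-2$ is immediate, $\phi_{|S^2}$ being then constant); thus $(\mathcal{L}^{\oplus 1}-2(1+L))\phi=\mu\phi$. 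Exactly as in the proof of Lemma~\ref{prop:comm}, at $r=1$ the operator $\mathcal{L}^{\oplus 1}-2(1+L)$ carries no radial derivative and acts as $\Delta_{S^2}-2$; restricting the last identity to $S^2$ then yields $\Delta_{S^2}(\phi_{|S^2})=(\mu+2)\,\phi_{|S^2}$. Since $\phi$ is a polynomial, $\phi_{|S^2}$ is a finite sum of spherical harmonics, and a $\Delta_{S^2}$-eigenfunction of that form must be a single spherical harmonic (of degree $\ell$ with $\ell(\ell+1)=\mu+2$). As $\phi_{|S^2}$ is the pull-back of $\phi|_{\partial E}$, this is precisely the assertion.

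The main obstacle — essentially the only step that is not formal — is the reduction in the second paragraph: one needs the eigenspaces of $\mathcal{P}_E$ to be finite-dimensional and $\mathcal{L}^{\oplus 4}$ to be diagonalisable on them, so that the problem genuinely reduces to simultaneous eigenvectors (``every eigenvector'' being then understood relative to such a generating family). This rests on the pure-point polynomial structure of the ellipsoidal spectrum (cf.\ \cite{vidal2024igw}) and on the spectral theory of the Legendre operator \cite{CdV2023spectrum}. Everything afterwards is a direct consequence of the two lemmas together with the observation that $\phi$ is recovered from $(\vec{u},\rho_1)$ by an algebraic constant-coefficient operator; the only remaining bookkeeping is the harmless additive constant in $\phi$ and the elementary fact that a polynomial which is an eigenfunction of $\Delta_{S^2}$ is a spherical harmonic.
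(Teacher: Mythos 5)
Your proof is correct and follows essentially the same route as the paper's: transfer the Legendre eigenvalue equation from $(\vec{u},\rho_1)$ to $\nabla\phi$ through the constant-coefficient relation $\nabla\phi=(\go-iA)(\vec{u},\rho_1)^\top$, then use the commutator lemmas to restrict to $S^2$ and conclude that $\phi_{|S^2}$ is a spherical harmonic up to an adjustable constant. The only differences are presentational: you make explicit the reduction to joint eigenvectors of $\mathcal{P}_E$ and $\mathcal{L}^{\oplus 4}$ (which the paper leaves implicit) and you work with the scalar identity $(\mathcal{L}^{\oplus 1}-2(1+L))\phi=\mu\phi+c$ on the ball before restricting, rather than invoking Lemma \ref{prop:comm} as a black box.
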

\begin{proof}It follows from the equation $\nabla \phi =(\go -iA)(\vec{u},\rho_1)^t$ that, if $\vec{u}$ and $\rho_1 $  are of degree $n$ and have all components in $\ker \left({\mathcal L}-n(n+3)\right)$, ${\mathcal L}^{\oplus 3}(\nabla \phi )=n(n+3)\nabla \phi$.
Then, we use the commutation relation given in Lemma \ref{prop:comm} and get
\begin{equation*}
\nabla \left( \Delta _{S^2}\phi _{|S^2} - (n+1)(n+2) \phi _{|S^2}\right)=0.
\end{equation*}
The function $\phi _{|S^2} $ is a spherical harmonic of degree $n+1$ up to a constant (which can be assumed to vanish). 
Note that this holds for any eigenvector, without any restriction on the value of $\go $. 
\end{proof}

\section{Perspectives for future work}
\label{sec:perspectives}

Several questions have remained unanswered in the present study, and will be considered in future work. 
When $ D$ is an ellipsoid body, we are interested in a	more precise description of the low-frequency spectrum in the interval $]0,\omega_-[$.
It would be worth explicitly obtaining the number of eigenvalues with eigenvectors spherical harmonics of degree $n$ as well as  
 the asymptotic distribution of the eigenvalues in $]0,\omega_-[$  when the degree $n$ of the corresponding spherical
    harmonics tends to infinity. 
This would be complementary to the Weyl asymptotic formula in the interval $[\omega_-,\omega_+]$, which has been obtained in \cite{vidal2024igw}.
For $(x,\boldsymbol{\xi})\in T^\star (\partial D)\setminus 0$, let us denote by
$N(x,\boldsymbol{\xi})\in\N $ the number of values of $\omega$ such that $k_\omega (x,\boldsymbol{\xi})=0$ where $k_\go $ is the pull-back on $S^2$
of the principal symbol of
the Kelvin equation. We denote by ${U^\star S^2}$ the unit cotangent bundle of the 2-sphere with the canonical metric.  
Then, we conjecture the following asymptotics:
\begin{conjecture*}\label{conj:weyl-harm}
  Let $\mu_{j,n} $ be the eigenvalues of the Poincar\'e operator with polynomial eigenvectors of degree $n$.
Then, for $0<a,b<\go_-$,  we have 
\begin{equation*}
    \# \{ \mu_{j,n} \in [a,b]\} \sim \frac{2n+3}{8\pi^2} \int_{U^\star S^2\cap \{ (x,{\bf \xi})|k_\go (x,{\bf \xi})=0, ~ \omega \in [a,b]\}} N dL
\end{equation*}
when $n\ra +\infty $, where $L$ is the Liouville measure on $ U^\star S^2$.
\end{conjecture*}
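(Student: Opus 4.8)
The plan is to combine the Weyl law for the operator $\mathcal{L}^{\oplus 4}$ restricted to the polynomial space of degree $n$ with the microlocal reduction of the Kelvin equation to $U^\star S^2$ furnished by Theorem \ref{theo:harm} and Lemma \ref{lemm:wf}. First I would recall that, by the commutation of $\mathcal{P}$ with $\mathcal{L}^{\oplus 4}$, the spectral problem splits along the eigenspaces $\ker(\mathcal{L} - n(n+3))$; inside the block of degree $n$ the Poincar\'e operator acts as a finite-dimensional self-adjoint matrix whose eigenvalues are exactly the $\mu_{j,n}\in[-\go_+,\go_+]$. By Theorem \ref{theo:harm} the restriction to $\pa E$ of the associated pressure is (the pull-back of) a spherical harmonic of degree $n+1$ on $S^2$. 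Thus counting the $\mu_{j,n}$ in $[a,b]$ amounts to counting, among the $2n+3$-dimensional space $\mathcal H_{n+1}$ of degree-$(n+1)$ spherical harmonics, the solutions of the compatibility condition coming from the Kelvin equation $K_{\mu_{j,n}}\phi_{|S^2}=0$; the weight $n(x,\boldsymbol{\xi})$ appears because for a fixed covector there can be several values of $\omega$ annihilating $k_\go$.

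Next I would set $h=1/n$ as a semiclassical parameter and view $K_\go$, restricted to $\mathcal H_{n+1}$, as (the quantization of) a symbol on $T^\star S^2$. Because $\Delta_{S^2}\phi_{|S^2}=(n+1)(n+2)\phi_{|S^2}$, the relevant frozen energy surface is $U^\star S^2$, and the principal symbol of the reduced operator is $k_\go$ restricted to $\mathcal T_\go = C_\go\cap U^\star S^2$. The heart of the argument is then a semiclassical Weyl law on $S^2$: the number of eigenvalues of the reduced $h$-pseudodifferential operator family lying in $[a,b]$ is asymptotically $(2\pi h)^{-\dim}$ times the symplectic volume of the region $\{(x,\boldsymbol\xi)\in U^\star S^2 : k_\go(x,\boldsymbol\xi)=0,\ \omega\in[a,b]\}$, counted with the multiplicity $n$ of $\omega$-roots. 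The prefactor $(2n+3)/(8\pi^2)$ is $\dim\mathcal H_{n+1}$ divided by the total Liouville volume $4\pi\cdot 2\pi$ of $U^\star S^2$, which is the natural normalisation making $\int_{U^\star S^2} dL = 8\pi^2$. For the Liouville-measure statement I would invoke the equidistribution built into Lemma \ref{lemm:wf}: the $h$-wavefront sets of the eigensequences are invariant under both the geodesic flow of $S^2$ and the Hamiltonian flow of $k_\go$, and on the $2$-torus $\mathcal T_\go$ the induced dynamics is minimal (or at worst Morse--Smale with the relevant measure supported as claimed), so the counting measure of eigenvalues converges weakly to the projected Liouville measure.

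I expect the main obstacle to be making the semiclassical Weyl law rigorous on the finite-dimensional, rapidly-shrinking spaces $\mathcal H_{n+1}$ rather than on a fixed manifold: the operator $K_\go$ is genuinely three-dimensional (acting on functions on $\pa E$, not on $S^2$), and the reduction to $U^\star S^2$ proceeds only after freezing $\Delta_{S^2}$ at its eigenvalue, so one must control the error terms in the symbolic calculus uniformly as $n\to\infty$ and check that the non-ellipticity of $k_\go$ (the very feature producing surface waves) does not destroy the remainder estimate. A secondary difficulty is the behaviour near the fold set $\{\|\vec W_\go\|_{g_{\go,\pa}}=1\}$, where $C_\go$ degenerates and $dk_\go$ may vanish; there one needs either the non-degeneracy hypothesis on the Hamiltonian flow of $k_\go$ or a separate argument showing this set carries zero Liouville measure, so that it does not contribute to the leading-order count. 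Modulo these points, the conjecture should follow by the standard Tauberian passage from the trace asymptotics to the eigenvalue-counting asymptotics, exactly as in the Weyl law of \cite{vidal2024igw} for the complementary interval $[\omega_-,\omega_+]$.
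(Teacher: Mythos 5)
Note first that the statement you are addressing is presented in the paper as a \emph{conjecture}: the paper offers no proof, only the one-line indication that the formula ``could be proved using Bohr--Sommerfeld rules (following \cite{CdV80})'', i.e.\ by exploiting the integrable structure of the surface dynamics (the invariant Lagrangian tori $\mathcal{T}_\omega = C_\omega\cap U^\star S^2$) and counting quantization conditions on these tori as $\omega$ sweeps $[a,b]$. Your proposal goes in a different direction (block decomposition by polynomial degree, a semiclassical Weyl law with $h=1/n$ on the spaces $\mathcal{H}_{n+1}$ of spherical harmonics, a Tauberian passage, and an equidistribution claim). It correctly identifies the normalisation $\tfrac{2n+3}{8\pi^2}$ as $\dim\mathcal{H}_{n+1}$ divided by the total Liouville volume $8\pi^2$ of $U^\star S^2$, and the role of the multiplicity $n(x,\xi)$, so as a heuristic it is consistent with the conjectured formula; but, as you acknowledge yourself, it is a program rather than a proof, and its gaps are not merely technical.

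Concretely: (i) the spectral parameter $\omega$ enters $K_\omega$ nonlinearly, so you are not counting eigenvalues of a single reduced $h$-pseudodifferential operator but zero modes of an $\omega$-dependent family; the ``standard semiclassical Weyl law plus Tauberian argument'' you invoke does not apply as stated, and this is precisely why the paper points to Bohr--Sommerfeld rules, which handle the family by quantizing the tori $\mathcal{T}_\omega$ and automatically produce the weight $n(x,\xi)$. (ii) Your appeal to Lemma \ref{lemm:wf} proves too little: invariance of the $h$-wavefront set under the geodesic flow and the Hamiltonian flow of $k_\omega$ localizes eigenfunctions on $\mathcal{T}_\omega$, but it gives no statement about the distribution of the \emph{eigenvalues}; the step ``the counting measure of eigenvalues converges weakly to the projected Liouville measure'' is unsupported by anything in the paper. (iii) The reduction to ``counting solutions of $K_{\mu}\phi=0$ within $\mathcal{H}_{n+1}$'' is not justified: $K_\omega$ is built from the Dirichlet-to-Neumann operator of the $\omega$-dependent metric $g_\omega$ on $\partial E$ and has no reason to preserve spherical harmonics; Theorem \ref{theo:harm} only asserts that the boundary traces of actual eigen-pressures happen to lie in $\mathcal{H}_{n+1}$, which is weaker than having a genuine finite-dimensional spectral problem on that space. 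Finally, your treatment of the degenerate set where $dk_\omega$ vanishes is deferred rather than resolved, whereas any Bohr--Sommerfeld or Weyl-type argument must control it to get the leading asymptotics. In short, neither you nor the paper proves the statement; your route differs from the suggested one, and in its present form it would not close without the quantization-condition (or an equivalent spectral-flow) input.
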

{\it Idea of proof.--}
This two-dimensional Weyl formula could be proved using Bohr - Sommerfeld rules (following \cite{CdV80}).

Another interesting extension would be to consider an unstable stratification (i.e. when $N^2<0$). 
In this case, the Poincar\'e operator is no longer self-adjoint.
Then, it would be worth describing the essential spectrum (i.e. to determine the set of values of $\omega$ for which the boundary conditions are elliptic). 
This is a rather classical problem, which is for instance described in Chapter XX of \cite{hormander1985analysis}.
Finally, another interesting application would be the case where $\vec{g}$ is not a constant vector. 
For instance, this situation occurs inside planets or stars (where gravity is central). 
Then, for a given $\omega$, the Poincar\'e operator can be elliptic in some regions of $D$ and hyperbolic in others \cite{friedlander1982gafd}.
The calculus of the principal symbol of the Kelvin operator at $x\in \partial D$ depends only on the value of $\vec{g}$ at these points, so that the microlocal analysis presented in this work could be re-employed to investigate this problem. 

\appendix 

\section{The Dirichlet-to-Neumann operator} \label{app:dtn}
Let us consider a Laplace-Beltrami  operator $\Delta _g $ in $D$.
The Dirichlet-to-Neumann operator $DtoN $ acts on functions on $\pa D$ as follows.
If $f:\pa D \ra \C $ is smooth, one extends $f$ into a smooth function $F$ in $D$ satisfying  $\Delta _g F=0$ and $F_{| \pa D}= f$. 
Then, we define   $DtoN f = \pa F/\pa n_g $, where $n_g$ is the outgoing unit normal along the boundary.
It turns our that the  operator $DtoN$ is a self-ajoint \OPD~ of degree $1$ on $L^2 (\pa D, v_g )$,
where $v_g$ is the Riemannian volume on $\pa D$ with respect to the restriction $g_\pa$ of $g$ to $\pa D$.
In particular, $DtoN$ maps  the Sobolev space $H^\ha (\pa D)$  into the Sobolev space $H^{-\ha}(\pa D)$.
Note also that, if $f\in H^\ha (\pa D)$, the harmonic extension of $f$ in $D$ belongs to $H^1(D)$. 
The principal symbol of $DtoN_g $ is $\sqrt{g_\pa^\star}$, where $g_\pa^\star $ is the dual metric of $g_\pa$.
Further details are given in Chapter 7 of \cite{taylor2013partial}, and see also \cite{LU89,Gir22}. 

\section{Ellipticity and spectra}
\label{app:ell}
\begin{figure}
    \centering
    \includegraphics[width=0.35\textwidth]{./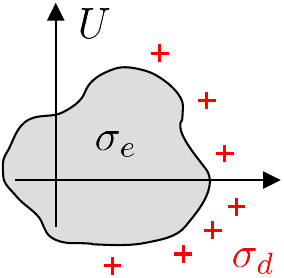}
    \caption{Discrete and essential spectra defined in Theorem \ref{theo:specelip}.}
    \label{fig:spectra}
\end{figure}

We denote by $\mathcal{P}$ the analytic family of pseudo-differential equations $P_\go$ (with $\go \in\C$) on a closed manifold $X$ with degree$(P_\go )\equiv 0$ (we can always assume the last condition by using a left product with an invertible elliptic operator). 
The operators $P_\go$ are continuous and linear from $L^2(X) $ into itself. 
The spectrum of this family is defined
as the set of $\go \in  \C $ so that $P_\go $ is not invertible. 
If $P_\go $ is invertible, the inverse is also continuous.
We also remind the reader that a \OPD~ $Q$ is elliptic at $(x,\boldsymbol{\xi})\in T^\star X \setminus 0$ if and only if the principal symbol $p(x,\boldsymbol{\xi}) $ is invertible. 
$Q$ is elliptic if this holds for all $(x,\boldsymbol{\xi}) \in T^\star X \setminus 0$. 
We will need the following theorem defining the different spectra (as illustrated in figure \ref{fig:spectra}).

\begin{theorem} 
Let $\mathcal{P}$ be as defined above, and $U\subset \C$ be the open set of the values of $\go$ where $P_\go$ is elliptic.
Then, there exists a discrete subset $\gs_d (\mathcal{P})$ of $U$ such that $P_\go$ is not invertible. 
If $\go \in\gs_d (\mathcal{P})$, then $\dim (\ker (P_\go))$ is finite.
More precisely, for all $\go \in U$,  $P_\go$ is Fredholm.
If  $\go\in \gs_e (\mathcal{P})=\C\setminus U$, then $P_\go$ is not invertible. 
\label{theo:specelip}
\end{theorem}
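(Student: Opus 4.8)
The plan is to establish Theorem~\ref{theo:specelip} by combining the classical Fredholm theory of elliptic \OPD s with the analytic Fredholm theorem. First I would recall that, since $X$ is compact and $P_\go$ has order zero, for each $\go$ in the ellipticity set $U$ the operator $P_\go:L^2(X)\to L^2(X)$ admits a parametrix $Q_\go$ modulo smoothing operators: $Q_\go P_\go = I + R_\go$ with $R_\go$ compact (indeed smoothing). This is the standard construction of an approximate inverse from the principal symbol, which is invertible precisely on all of $T^\star X\setminus 0$ by the definition of ellipticity. Consequently $P_\go$ is Fredholm of index zero (the index is locally constant in $\go$ and can be computed, e.g., at a point where $P_\go$ is invertible, or by a homotopy argument), and $\dim\ker P_\go = \dim\operatorname{coker}P_\go < \infty$ for all $\go\in U$. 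This gives the ``$P_\go$ is Fredholm'' and the finite-dimensional kernel assertions at once.

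Next I would address the discreteness of $\gs_d(\mathcal{P})$ inside $U$. Since $\mathcal P:\go\mapsto P_\go$ is an analytic family of bounded operators on a connected component of $U$, and each $P_\go$ is Fredholm, the analytic Fredholm theorem (in the form for families of the shape ``invertible $+$ compact'', after composing with a fixed parametrix $Q_{\go_0}$ at a base point to reduce to $I + $ compact) applies: either $P_\go$ is nowhere invertible on that component, or the set of non-invertible $\go$ is discrete with no accumulation point in the component. To rule out the first alternative I would invoke the existence of at least one $\go\in U$ where $P_\go$ is invertible --- this is precisely the content supplied by the earlier parts of the paper (e.g. for large $|\go|$, where the operator is not only elliptic but genuinely invertible, as used in the proof of Theorem~\ref{theo:general}). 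Hence on each component $\gs_d(\mathcal{P})$ is discrete, and taking the union over components (countably many, or finitely many in the cases of interest) keeps it discrete in $U$.

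Finally, the statement about the essential spectrum $\gs_e(\mathcal{P}) = \C\setminus U$ is almost tautological once the framework is set up: if $\go\notin U$ then the principal symbol $p(\go)(x,\boldsymbol{\xi})$ fails to be invertible at some $(x,\boldsymbol{\xi})\in T^\star X\setminus 0$, and a standard quasimode (singular sequence) construction microlocalized near that point shows $P_\go$ cannot be Fredholm, in particular cannot be invertible. Conversely, everything in $U$ has already been handled. So I would conclude by assembling: $U = \C\setminus\gs_e(\mathcal P)$ is the Fredholm locus, $\gs_d(\mathcal P)\subset U$ is the discrete set of genuine (finite-rank) degeneracies, and $\gs(\mathcal P) = \gs_d(\mathcal P)\sqcup\gs_e(\mathcal P)$.

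The main obstacle I anticipate is not any single deep step but rather the bookkeeping needed to legitimately reduce to the hypotheses of the analytic Fredholm theorem: one must verify that the family is genuinely analytic in the operator-norm topology (which requires the order-zero normalization and smoothness of $P_\go$ in $\go$, both available here), and, crucially, exhibit a point of invertibility in \emph{every} connected component of $U$ --- if some component failed to meet the known-invertible region, discreteness there could not be asserted. In the present setting this is handled by the explicit structure of $P_\go$ and the fact that $U$ is, up to the real slices studied in the body, connected with the large-$|\go|$ region, so this obstacle is more a matter of care than of genuine difficulty; the rest is textbook elliptic theory as in the references cited (Chapter~7 of \cite{taylor2013partial} and Part~IV of \cite{grubb2008distributions}).
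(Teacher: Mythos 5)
Your proposal follows essentially the same route as the paper: a parametrix construction shows $P_\go$ is Fredholm on the ellipticity set $U$, the analytic Fredholm theorem yields discreteness of the non-invertible values there, and a quasimode (oscillatory test function) argument shows non-invertibility wherever the principal symbol degenerates. Your extra care about the dichotomy in the analytic Fredholm theorem --- exhibiting a point of invertibility in each component of $U$ --- is a point the paper leaves implicit, but it does not change the argument in substance.
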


If $P_\go $ is elliptic, we can define a right parametrix as
\begin{equation*}
P_\go R_\go ={\rm Id} + A_\go
\end{equation*} 
with $A_\go $ smoothing and, similarly, a left parametrix $L_\go $.
Hence, $P_\go $ is Fredholm and we can apply the Fredholm analytic theorem. 
Conversely, if $P_\go $ is non-elliptic, it is non-invertible.
We can use test functions of the form $a(x)e^{i\gl x\boldsymbol{\xi}_0 }$ with $p_\go(a(x_0),\boldsymbol{\xi}_0)=0$, where $p_\go $ is the principal symbol of $P_\go$, in order to  show that the operator does not admit a continuous
inverse (see \cite{colin2020spectral}, section 2, for more details). 
This result also applies to the boundary calculus of \OPD's \cite{BdM71,grubb2008distributions}. 

It follows from this that the essential spectrum of the Poincar\'e operator is the same, in the interval $0<\omega < \omega_- $, as the set of values of $\omega$ for which the Kelvin equation is invertible. 
Both properties are equivalent to say that the boundary symbol is non-elliptic. 
This depends on the  symbolic calculus of boundary \OPD s, which is a rather technical issue. 
The identification of discrete spectra is much simpler.
Indeed, we can transfer any eigenmodes of the Poincar\'e operator to a $H^1$ solution of the Kelvin equation (and conversely).
More details will be given in future work, where we consider also the case of an unstable stratification (i.e. $N^2 < 0$) for which a greater part of that calculation is needed.

\bibliographystyle{alpha}
\bibliography{K-waves}

\end{document}